\documentclass[journal,twoside]{IEEEtran}

\usepackage{amsmath,amssymb,amsfonts,mathrsfs}
\usepackage{graphicx}
\usepackage{bm}
\usepackage{algorithm,algpseudocode}
\usepackage{hyperref,cite}
\usepackage{textcomp}
\usepackage{xfrac}
\usepackage{multirow}
\usepackage{arydshln}
\usepackage{color}

\usepackage[dvips]{epsfig}
\usepackage{subfig}
\usepackage{epstopdf}

\usepackage{amsthm}


\usepackage{etoolbox}

\newtheorem{theorem}{Theorem}
\newtheorem{lemma}{Lemma}
\newtheorem{corollary}{Corollary}

\newtheorem{definition}{Definition}
\newtheorem{assumption}{Assumption}
\newtheorem{remark}{Remark}



\newcommand{\rank}{{\mathrm {rank}}}
\newcommand{\diag}{{\mathrm {diag}}}
\newcommand{\re}{{\rm Re}}
\newcommand{\im}{{\rm Im}}
\newcommand{\Tr}{\mathrm{Tr}}

\newcommand{\st}{\mathrm{subject~to}}
\newcommand{\diff}{\mathop{}\mathrm{d}}
\newcommand{\mi}{\mathsf{j}}

\newcommand{\nuclearnorm}[1]{\left\lVert#1\right\rVert_*}


\newcommand{\bd}{\mathbf{d}}
\newcommand{\be}{\mathbf{e}}
\newcommand{\bi}{\mathbf{i}}

\newcommand{\bp}{\mathbf{p}}
\newcommand{\bq}{\mathbf{q}}

\newcommand{\bv}{\mathbf{v}}
\newcommand{\bx}{\mathbf{x}}

\newcommand{\bz}{\mathbf{z}}

\newcommand{\bA}{\mathbf{A}}
\newcommand{\bB}{\mathbf{B}}

\newcommand{\bE}{\mathbf{E}}
\newcommand{\bG}{\mathbf{G}}
\newcommand{\bH}{\mathbf{H}}

\newcommand{\bM}{\mathbf{M}}

\newcommand{\bU}{\mathbf{U}}

\newcommand{\bW}{\mathbf{W}}
\newcommand{\bX}{\mathbf{X}}
\newcommand{\bY}{\mathbf{Y}}

\newcommand{\tbU}{\tilde{\mathbf{U}}}

\newcommand{\bSigma}{\bm{\Sigma}}
\newcommand{\bLambda}{\bm{\Lambda}}
\newcommand{\bmu}{\bm{\mu}}
\newcommand{\bnu}{\bm{\nu}}
\newcommand{\bmeta}{\bm{\eta}}
\newcommand{\tbmeta}{\tilde{\bmeta}}

\newcommand{\tbX}{\tilde{\mathbf{X}}}
\newcommand{\tbx}{\tilde{\mathbf{x}}}
\newcommand{\tbv}{\tilde{\mathbf{v}}}
\newcommand{\myexp}{\mathrm{e}}

\newcommand{\cB}{{\mathcal B}}

\newcommand{\cG}{{\mathcal G}}

\newcommand{\cL}{{\mathcal L}}
\newcommand{\cM}{{\mathcal M}}
\newcommand{\cN}{{\mathcal N}}

\newcommand{\cT}{{\mathcal T}}



\DeclareMathOperator*{\argmin}{arg\,min}
\DeclareMathOperator*{\mini}{\mathrm{minimize}}
\DeclareMathOperator*{\maxi}{\mathrm{maximize}}



\begin{document}

\title{Conic Relaxations for Power System State Estimation with Line Measurements}

\author{Yu Zhang, Ramtin Madani, and Javad Lavaei
\thanks{Yu Zhang and Javad Lavaei are  with the Department of Industrial Engineering and Operations Research,
University of California, Berkeley (yuzhang49@berkeley.edu and lavaei@berkeley.edu).
Ramtin Madani is with the Department of Electrical Engineering, University of Texas at Arlington (ramtin.madani@uta.edu).
This work was supported by the DARPA Young Faculty Award, ONR Young Investigator Program Award,
AFOSR Young Investigator Research Program, NSF CAREER Award 1351279, and NSF ECCS Award 1406865.
}}

\markboth{}%
{}
\maketitle


\begin{abstract}
This paper deals with the non-convex power system state estimation (PSSE) problem,
which plays a central role in the monitoring and operation of electric power networks.
Given a set of noisy measurements, PSSE aims at estimating the vector of complex voltages at all buses of  the network. This is a challenging task due to the inherent nonlinearity of power flows, for which existing methods  lack  guaranteed convergence and theoretical analysis.
Motivating by these limitations, we propose a novel convexification framework for the PSSE using semidefinite programming (SDP) and second-order cone programming (SOCP) relaxations.
We first study a related power flow (PF) problem as the noiseless counterpart, which is cast as a constrained minimization program by adding a suitably designed objective function.
We study the performance of the proposed framework in the case where the set of measurements includes:
(i) nodal voltage magnitudes, and (ii) branch active power flows over a spanning tree of the network.
It is shown that  the SDP and SOCP relaxations both recover the true PF solution
as long as the voltage angle difference across each line of the network is not too large
(e.g., less than $90^{\circ}$ for lossless networks).
By capitalizing on this result, penalized SDP and SOCP problems are designed to solve the PSSE,
where a penalty based on the weighted least absolute value is incorporated for fitting noisy measurements with possible bad data.
Strong theoretical results are derived to quantify the optimal solution of the penalized SDP problem, which is shown to
possess a dominant rank-one component formed by lifting the true voltage vector.
An upper bound on the estimation error is also derived as a function of the noise power,
which decreases exponentially fast as the number of measurements increases.
Numerical results on benchmark systems, including a 9241-bus European system,
are reported to corroborate the merits of the proposed convexification framework.
\end{abstract}

\begin{IEEEkeywords}
Power system state estimation, power flow analysis, convex relaxations, semidefinite programming, tree decomposition.
\end{IEEEkeywords}

\section*{Nomenclature}

\addcontentsline{toc}{section}{Nomenclature}

\subsection{Sets and numbers}

\begin{IEEEdescription}[\IEEEusemathlabelsep\IEEEsetlabelwidth{$\cN_p$, $\cN_q$}]

\item[$\cN$, $N$] Set and number of buses.
\item[$\cL$, $L$] Set and number of power lines.
\item[$\cM$, $M$] Set and number of measurements.
\item[$\cN_p$, $\cN_q$]  Sets of active and reactive power injection measurements.
\item[$\mathcal{D}$] Set of all dual SDP certificates.
\end{IEEEdescription}

\subsection{Input signals and constants}

\begin{IEEEdescription}[\IEEEusemathlabelsep\IEEEsetlabelwidth{$\mathbf{Y}$, $\mathbf{Y}_f$, $\mathbf{Y}_t$}]
\item[$\bv$, $\bi$] $N$-dimensional complex vectors of nodal voltages (state of the system) and current injections.
\item[$\bp$, $\bq$] $N$-dimensional real vectors of net injected active and reactive powers.
\item[$\bi_{f}$, $\bi_{t}$] $L$-dimensional complex vectors of current injections at the \emph{from} and \emph{to} ends of all branches.
\item[$\mathbf{Y}$, $\mathbf{Y}_f$, $\mathbf{Y}_t$] Matrices of nodal admittance, \emph{from} branch admittance, and \emph{to} branch admittance.
\item[$\bY_{l,p_{f}}$, $\bY_{l,p_{t}}$] Coefficient matrices corresponding to active power flow measurements at the \emph{from} and \emph{to} ends over the $l$-th branch.
\item[$\bY_{l,q_{f}}$, $\bY_{l,q_{t}}$] Coefficient matrices corresponding to reactive power flow measurements at the \emph{from} and \emph{to} ends over the $l$-th branch.
\item[$\bM_0$] Designed coefficient matrix in the objective.
\item[$\bM_j$] Coefficient matrix corresponding to the $j$-th measurement.
\item[$\bz$]  $M$-dimensional real vector collecting all measurements.
\item[$|v_k|$, $\measuredangle v_{k}$] Voltage magnitude and angle at the $k$-th bus.
\item[$\measuredangle y_{st}$] Angle of the branch $(s,t)$ line admittance.
\item[$\eta_j$, $\sigma_j$] Additive noise and positive weight of the $j$-th measurement.
\item[$\rho$] Positive weight trading off the data fitting cost and the designed linear regularizer.
\item[$\zeta$] Defined root-mean-square estimation error of the obtained optimal SDP solution.

\end{IEEEdescription}

\subsection{Variables and functions}

\begin{IEEEdescription}[\IEEEusemathlabelsep\IEEEsetlabelwidth{$f_{\mathrm{WLAV}}(\cdot)$z}]
\item[$\bX$, $\bH$] $N \times N$  primal and dual matrix variables.
\item[$\boldsymbol{\mu}$] $M$-dimensional real vector of Lagrange multipliers.
\item[$\boldsymbol{\nu}$] $M$-dimensional real vector of slack variables.
\item[$f_{\mathrm{WLAV}}(\cdot)$] Weighted least absolute value cost.
\item[$f_{\mathrm{WLS}}(\cdot)$] Weighted least squares cost. 

\end{IEEEdescription}

\section{Introduction}

An electrical grid infrastructure is operated for delivering electricity from
power generators to consumers via interconnected transmission and distribution networks.
Accurately determining the operating point and estimating the underlying state of the system are of paramount importance for the reliable and economic operation of power networks.
Power flow analysis and power system state estimation play indispensable roles in the planning and monitoring of the power grid. The solutions of these two problems are used for many optimal resource allocation problems such as unit commitment, optimal power flow (OPF), security-constrained OPF, and network reconfiguration \cite{Wollenberg13,GG13}.

\subsection{Power Flow Analysis} \vspace{0mm}

The power flow (PF) problem is a numerical analysis of the steady-state electrical power flows,
which serves as a necessary prerequisite for future system planning.
Specifically, having measured the voltage magnitudes and injected active/reactive powers at certain buses,
 the PF problem aims to find the unknown voltage magnitude and phase angle at each bus of a power network.
Using the obtained voltage phasors and the network admittances, line power flows can then be determined for the entire system.
The calculation of power flows is essentially equivalent to solving a set of quadratic equations obeying the laws of physics.
Solving a system of nonlinear polynomial equations is NP-hard in general.
Be\'{z}out's theorem asserts that a well-behaved system can have exponentially many solutions \cite{Hartshorne77}.
Upper bounds on the number of PF solutions have been analyzed in the recent work \cite{MolzahnACC16} and the references therein.
When it comes to the feasibility of AC power flows, it is known that this problem is NP-hard for both transmission and distribution networks \cite{Bienstock15,Lehmann16}.

For solving the PF problem, many iterative methods such as the Newton-Raphson method and Gauss-Seidel algorithms have been extensively studied over the last few decades \cite{Bergen00}.
The Newton-Raphson method features quadratic convergence whenever the initial point is sufficiently close to the solution \cite{Tinney67,Stott74}.
Nevertheless, a fundamental drawback of various Newton-based algorithms is that there is no convergence guarantee in general. By leveraging advanced techniques in complex analysis and algebraic geometry,
sophisticated tools have been developed for solving PF, including holomorphic embedding load flow
and numerical polynomial homotopy continuation  \cite{Trias12,Li03}.
However, these approaches  involve costly computations, and are generally not suitable for large-scale power systems.
Using the theory of monotone operators and moment relaxations, the papers \cite{DJcdc15} and \cite{DJ15} identify
a ``monotonicity domain'', within which it is possible to efficiently find the PF solutions or certify their non-existence.
A review on recent advances in computational methods for the PF equations can be found in \cite{Mehta15a}.

Facing the inherent challenge of non-convexity, convex relaxation techniques have been recently developed for finding the PF solutions \cite{Madani15CDC}.
More specifically, a class of convex programs is proposed to solve the PF problem in the case where the solution belongs to a recovery region that contains voltage vectors with small angles.
The proposed convex programs are in the form of semidefinite programming (SDP), where a convex objective
is designed as a surrogate of the rank-one constraint to guarantee the exactness of the SDP relaxation.

\subsection{Power System State Estimation}

Closely related to the PF problem, the power system state estimation (PSSE) problem plays a key  role for grid monitoring.
System measurements are acquired through the supervisory control and data acquisition (SCADA) systems,
as well as increasingly pervasive phasor measurement units (PMUs).
Given these noisy measurements, the PSSE task aims at estimating the complex voltage at each bus,
and determining the system's operating conditions.
The PSSE is traditionally formulated as a nonlinear least-squares (LS) problem,
which is commonly solved by the Gauss-Newton algorithm in practice \cite{Monticelli12,Abur04}.
The algorithm is based on a sequence of linear approximations of the nonlinear residuals.
A descent direction is obtained at each iteration by minimizing the sum of squares of the linearized residuals.
However, the Gauss-Newton algorithm has no guaranteed convergence  in general.
Furthermore, a linear search must be carefully carried out for the damped Gauss-Newton method.
The widely-adopted Levenberg-–Marquardt algorithm finds only a local optimum of the nonlinear LS problem,
and may still be slow for large residual or highly nonlinear problems \cite{Bjorck96, Mascarenhas14}.

For a linear regression model, the classic Gauss-Markov theorem states that
if the additive noises are uncorrelated with mean zero and homoscedastic with finite variance,
then the ordinary least squares estimator (LSE) of the unknown parameters is the best linear unbiased estimator (BLUE)
that yields the least variance estimates. The generalized LSE should be applied when the noise covariance matrix is positive definite \cite{Bjorck96}. The work \cite{Zyskind69} shows that even when the noise covariance matrix is singular, the BLUE can be found by utilizing
its pseudo-inverse in the generalized normal equations.
Analytic solutions of the BLUE and the minimum variances of the estimates are available for the linear model.
In addition, minimum variance unbiased estimator (MVUE) and Bayesian-based estimators are studied
in \cite{Amini14} and \cite{Amini15}. It is well known that when the linear measurements are normally distributed, the LSE coincides with the maximum-likelihood estimator (MLE).
However, LSE for the PSSE problem may not possess these attractive properties due to the inherently nonlinear measurements. There are several issues involved from both optimization and statistical perspectives:
\begin{itemize}
\item The problem of nonlinear LS estimation is generally nonconvex, which can have multiple local solutions.
      Hence, finding a globally optimal solution is challenging.
\item Newton-based iterative algorithms are sensitive to the initialization and lack a guaranteed convergence.
      They may converge to a stationary point. It is nevertheless not easy to interpret that point,
      and quantify its distance relative to the true unknown state of the system.
\item Even if a global solution can be obtained, the nonlinear LSE may not correspond to the MVUE.
      When the noises are not from the exponential family of distributions, the LSE is different from the MLE in general.
\item The LSE is vulnerable to the presence of outliers primarily due to its uniformly squaring, which makes data with
      large residuals have a significant influence on the fitted model.
\end{itemize}

To deal with bad data, the weighted least absolute value (WLAV) function is proposed as the data fitting cost in \cite{Irving78, Kotiuga82},
for which efficient algorithms are developed in \cite{Singh94,Abur91}.
The work \cite{Celik92} presents linear transformations to mitigate the deteriorating effect of ``leverage points'' on the WLAV estimator.
Robust or distributed PSSE has also been developed in the papers \cite{Irving08, Kekatos13, Conejo07, Minot16}.
The state estimation problem with line flow measurements using an iterative algorithm is studied in \cite{Dopazo70} and \cite{Dopazo70_2},
where complex power flows over all transmission lines and at least one voltage phasor are assumed to be measured to achieve the necessary redundancy for the solution of the problem. The performance of these selected measurements and the proposed algorithm tested on the Ontario hydro power system are reported in \cite{Porretta73}. Heuristic optimization techniques are also utilized for PSSE in \cite{Naka03,Lee08}.

Intensive studies of the SDP relaxation technique for solving fundamental problems in power networks have been springing up due to the pioneering papers \cite{Bai08}, \cite{lavaei2011_1} and \cite{LL2012_1}.
The work \cite{LL2012_1} develops an SDP relaxation for finding a global minimum of the OPF problem.
A sufficient and necessary condition  is provided to guarantee a zero duality gap, which is satisfied by several benchmark systems.
From the perspective of the physics of power systems, the follow-up papers \cite{SLa12} and \cite{sojoudi2014exactness} develop theoretical results to support the success of the
SDP relaxation in handling the non-convexity of OPF.
The papers \cite{madani2013convex} and \cite{madani2014promises} develop a graph-theoretic SDP framework
for finding a near-global solution whenever the SDP relaxation fails to find a global minimum.
Recent advances in the convex relaxation of the OPF problem are summarized in the tutorial papers \cite{Low2014_1} and \cite{Low2014_2}.

The paper \cite{Zhu11} initializes the idea of solving the PSSE problem via the SDP relaxation.
When the SDP  solution is not rank one, its principal eigenvector is used to recover approximate voltage phasors.
 The work \cite{Weng12} suggests generating a ``good'' initial point from the SDP optimal solution
to improve the performance of Newton's method, while a nuclear norm regularizer is used to promote a low-rank solution in \cite{Weng15}.
Distributed or online PSSE using the SDP relaxation can be found in \cite{Weng13,Zhu14,KimGG14}.
However, in the literature there is a lack of theoretical analysis on 
the quality of the SDP optimal solution for estimating the complex voltages. 
Hence, to the best of our knowledge, this is still an intriguing open problem.

The aforementioned grand challenges of the PSSE problem motivate us to revisit the design of a high-performance estimator with finite measurements.
The novelty and main contributions of the present work are outlined in the ensuing subsection.

\subsection{Contributions}
In this paper, we start with a PF problem that can be regarded as the noiseless counterpart of PSSE.
In contrast to the standard setup with only nodal measurements at the PV, PQ and slack buses,
one objective of this work is to investigate the effect of branch flow measurements on reducing the computational complexity of  the PF problem.
Motivated by the work \cite{Madani15CDC},
we contrive a convex optimization framework for the PF problem using  SDP and second-order cone programming (SOCP) relaxations.
It is shown that  the proposed conic relaxations are both always exact if: (i) the set of measurements includes the nodal voltage magnitude at each bus and line active power flows over a spanning tree of the power network, and (ii) the line phase voltage differences are not too large (e.g., less than $90^{\circ}$ for lossless networks).

By building upon the proposed convexification framework for the PF problem, we develop a penalized convex program for solving the PSSE problem.
In addition to an $\ell_1$ norm penalty that is robust to outliers in the measurements,
the objective function of the penalized convex problem features a linear regularization term
whose coefficient matrix can be systematically designed according to the meter placements.
We present a theoretical result regarding the quality of the optimal solution of the convex program.
It is shown that the obtained optimal solution has a dominant rank-one matrix component,
which is formed by lifting the vector of true system state. The distance between the
solution of the penalized convex problem and the correct rank-one component is quantified as a function of the noise level.
An upper bound of the tail probability of this distance is further derived,
which also implies the correlation between the quality of the estimation and the number of measurements.

The effort of this paper is mainly on the scenario where the measurements include nodal voltage magnitudes and branch active power flows.
However, the developed mathematical framework is rather general and could be adopted to study the PSSE problem with other types of measurements.

\subsection{Notations}

Boldface lower (upper) case letters represent column vectors (matrices), and calligraphic letters stand for sets.
The symbols $\mathbb{R}$ and $\mathbb{C}$ denote the sets of real and complex numbers, respectively.
$\mathbb{R}^{N}$ and $\mathbb{C}^{N}$ denote the spaces of $N$-dimensional real and complex vectors, respectively.
$\mathbb{S}^N$ and $\mathbb{H}^N$ stand for the spaces of $N\times N$ complex symmetric and Hermitian matrices, respectively.
The symbols $(\cdot)^{\top}$ and $(\cdot)^{*}$ denote the transpose and conjugate transpose of a vector/matrix.
$\re(\cdot)$, $\im(\cdot)$, $\rank(\cdot)$,  $\Tr(\cdot)$,  and $\mathrm{null}(\cdot)$  denote the real part,
imaginary part, rank, trace, and null space of a given scalar or matrix.
$\|\mathbf{a}\|_2$, $\|\bA\|_F$, and $\nuclearnorm{\bA}$ denote the Euclidean norm of the vector $\mathbf{a}$,
the Frobenius norm and the nuclear norm of the matrix $\bA$, respectively.
The relation $\bX \succeq \mathbf{0}$ means that the matrix $\bX$ is Hermitian positive semidefinite.
The $(i,j)$ entry of $\bX$ is given by $X_{i,j}$. $\mathbf{I}_{N}$ denotes the $N\times N$ identity matrix.
The symbol $\diag(\bx)$ denotes a diagonal matrix whose diagonal entries are given by the vector $\bx$, while
$\diag(\bX)$ forms a column vector by extracting the diagonal entries of the matrix $\bX$.
The imaginary unit is denoted by $\mi$.
The expectation operator and the probability measure are denoted by $\mathbb{E(\cdot)}$ and $\mathbb{P}(\cdot)$, respectively.
The notations $\measuredangle x$ and $\lvert x\rvert$ denote the angle and  magnitude of a complex number $x$.
The notation $\bX[\mathcal{S}_1,\mathcal{S}_2]$ denotes the submatrix of $\bX$ whose rows and columns are
chosen from the  given index sets $\mathcal{S}_1$ and $\mathcal{S}_2$, respectively.

\section{Preliminaries}

\subsection{System Modeling}\label{sec:systmodel}
Consider an electric power network represented by a graph $\cG = (\cN,\cL)$,
where $\cN := \{1,\ldots,N\}$ and $\cL:= \{1,\ldots,L\}$ denote the sets of buses and branches, respectively.
Let $v_k \in \mathbb{C}$ denote the nodal complex voltage at bus $k\in\mathcal N$,
whose magnitude and phase angle are given as $|v_k|$ and $\measuredangle v_k$.
The net injected complex power at bus $k$ is denoted as $s_k=p_k+q_k\mi$.
Define $s_{lf}=p_{lf}+q_{lf}\mi$ and $s_{lt}=p_{lt}+q_{lt}\mi$
as the complex power injections entering the line $l\in \cL$ through the \emph{from} and \emph{to} ends of the branch. 
Note that the current $i_{l,f}$ and $i_{l,t}$  may not add up to zero due to the existence of transformers and shunt capacitors.
 Denote the admittance of each branch $(s,t)$ of the network as $y_{st}$.
The Ohm's law dictates that
\begin{align}
	\bi = \bY\bv,\quad \bi_{f} = \bY_{f}\bv,\quad \mathrm{and} \quad \bi_{t} = \bY_{t}\bv,
\end{align}
where $\bY = \bG + \mi\bB \in \mathbb{S}^{N}$ is the nodal admittance matrix of the power network, whose real and imaginary parts
are the conductance matrix $\bG$ and susceptance matrix $\bB$, respectively.
Furthermore, $\bY_{f}\in \mathbb{C}^{L\times N}$ and $ \bY_{t} \in \mathbb{C}^{L\times N}$
represent the \emph{from} and \emph{to} branch admittance matrices.
The injected complex power can thus be expressed as $\bp + \bq\mi = \diag(\bv\bv^{*}\bY^{*})$.
Let $\{\be_1,\ldots,\be_N\}$ denote the canonical vectors in $\mathbb{R}^N$. Define
\begin{equation}\label{nodalM}
	\begin{aligned}
		 \bE_{k}  &:= \be_k \be_k^{\top},\quad \bY_{k,p} := \frac{1}{2}(\bY^{*}\bE_{k}+\bE_{k}\bY),\\
		\bY_{k,q} &:= \frac{\mi}{2}(\bE_{k}\bY-\bY^{*}\bE_{k}).
	\end{aligned}
\end{equation}
For each $k\in \cN$, the quantities $|v_k|^2$, $p_k$ and $q_k$ can be written as
\begin{equation}\label{nodalQTY}
\hspace{-1mm}
		|v_k|^2  = \Tr(\bE_{k}\bv\bv^{*}),\
		p_k      = \Tr(\bY_{k,p}\bv\bv^{*}),\
		q_k      = \Tr(\bY_{k,q}\bv\bv^{*}).
\end{equation}
Similarly, the branch active and reactive powers for each line $l\in \cL$ can be expressed as
\begin{equation}\label{branchQTY}
	\begin{aligned}
		p_{l,f}  &= \Tr(\bY_{l,p_{f}}\bv\bv^{*}),\quad
		p_{l,t}  = \Tr(\bY_{l,p_{t}}\bv\bv^{*}) \\
		q_{l,f}  &= \Tr(\bY_{l,q_{f}}\bv\bv^{*}),\quad
		q_{l,t}  = \Tr(\bY_{l,q_{t}}\bv\bv^{*}),
	\end{aligned}
\end{equation}
where the coefficient matrices $\bY_{l,p_{f}},\bY_{l,p_{t}},\bY_{l,q_{f}},\bY_{l,q_{t}} \in \mathbb{H}^N$ are defined
over the $l$-th branch from node $i$ to node $j$ as
\begin{subequations}\label{branchM}
	\begin{align}
		\bY_{l,p_{f}}  &:= \frac{1}{2}(\bY^{*}_f\bd_l\be_{i}^{\top}+\be_{i}\bd_l^{\top}\bY_f) \label{Ylpf} \\
		\bY_{l,p_{t}}  &:= \frac{1}{2}(\bY^{*}_t\bd_l\be_{j}^{\top}+\be_{j}\bd_l^{\top}\bY_t) \\
		\bY_{l,q_{f}}  &:= \frac{\mi}{2}(\be_{i}\bd_l^{\top}\bY_f- \bY^{*}_f\bd_l\be_{i}^{\top}) \\
		\bY_{l,q_{t}}  &:= \frac{\mi}{2}(\be_{j}\bd_l^{\top}\bY_t-\bY^{*}_t\bd_l\be_{j}^{\top}),
	\end{align}
\end{subequations}
where $\{\bd_1,\ldots,\bd_L\}$ is the set of  canonical vectors in $\mathbb{R}^{L}$.

So far, nodal and line measurements of interest have been  expressed as quadratic functions of the complex voltage $\bv$.
The  PF and PSSE problems will be formulated next.

\vspace{-2mm}
\subsection{Convex Relaxation of Power Flow Equations}\label{sec:probform}

The task of the PSSE problem is to estimate the complex voltage vector $\bv$ based on $M$ real measurements:
\begin{align}
z_j = \bv^{*}\bM_j\bv + \eta_j, \quad \forall j \in \cM:=\{1,2,\ldots,M\},
\end{align}
where $\{z_j\}_{j \in \cM}$ are the known measurements, $\{\eta_j\}_{j \in \cM}$ are the possible measurement noises with  known statistical information, and $\{\bM_j\}_{j \in \cM}$ are
 arbitrary measurement matrices that could be  any subset of the Hermitian matrices defined in \eqref{nodalM} and \eqref{branchM}.
The  PF problem is a noiseless version of the PSSE problem.
More specifically, given a total of $M$ noiseless specifications $z_j$ for $j=1,2,\ldots,M$, the goal of PF
is to find the nodal complex voltage vector $\bv$ satisfying all quadratic measurement equations, i.e., 
\begin{subequations}\label{pfp}
\begin{align}
\mathrm{find}\quad &\bv \in \mathbb{C}^N \\
\st\quad  &\bv^{*}\bM_j\bv = z_j,\quad\forall j\in \cM.
\end{align}
\end{subequations}
After setting the phase of the voltage at the slack bus to zero, the problem reduces to $M$ power flow  equations with $2N-1$ unknown real parameters. The classical PF problem corresponds to the case $M=2N-1$, where the measurements are specified at the PV, PQ, and slack buses such that: 
\begin{itemize}
\item 
For each PV  (generator) bus $k$, the active power $p_k$ and the voltage magnitude $|v_k|$ are given.
\item 
For each PQ  (load) bus $k$, the active power $p_k$ and the reactive power $q_k$ are given.
\item For the slack  (reference) bus, the voltage magnitude $|v_{\mathrm{ref}}|$ and the phase angle $\measuredangle v_{\mathrm{ref}}$ are given.
\end{itemize}

Instead of solving the feasibility problem \eqref{pfp} to obtain the voltage vector $\bv$,
consider the  optimization problem
\begin{subequations}\label{PFP2}
\begin{align}
\quad \mini_{\bX \in \mathbb{H}^N, \bv \in \mathbb{C}^N} \quad &\Tr(\bM_0\bX) \\
\st\quad  &\Tr(\bM_j\bX) = z_j,\quad\forall  j\in \cM \\
\quad & \bX = \bv\bv^{*},
\end{align}
\end{subequations}
where its objective function is to be designed later.
Note that the constraint $\bX = \bv\bv^{*}$ can be equivalently replaced by the two conditions $\bX \succeq \mathbf{0}$ and $\rank(\bX) = 1$.
The SDP relaxation of~\eqref{PFP2} is obtained by dropping the rank-one constraint as
\begin{subequations}\label{PF-SDPP}
\begin{align}
\mini_{\bX \in \mathbb{H}^N}\quad &\Tr(\bM_0\bX) \label{PF-SDPP:obj} \\
\st\quad  &\Tr(\bM_j\bX) = z_j,\quad\forall  j\in \cM \label{PF-SDPP:meq} \\
\quad & \bX \succeq \mathbf{0}.\label{PF-SDPP:cone}
\end{align}
\end{subequations}
This relaxation correctly solves \eqref{PFP2} if and only if  it has a unique rank-1 solution $\bX^{\text{opt}}$, in which case $\bv$ can be recovered via the decomposition $\bX^{\text{opt}}=\bv \bv^{*}$.
The dual of \eqref{PF-SDPP} can be obtained as
\begin{subequations}\label{PF-SDPD}
\begin{align}
\maxi_{\bmu \in \mathbb{R}^M}\quad &-\bz^{\top}\bmu \\
\st\quad  &
\mathbf{H}(\boldsymbol{\mu}) \succeq \mathbf{0}, \label{PF-SDPD:PSD}
\end{align}
\end{subequations}
where the vector $\bz:=[z_1,\ldots,z_M]^{\top}$ collects all the available measurements,
$\bmu = [\mu_1,\ldots,\mu_M]^{\top} $ is the Lagrangian multiplier vector associated with
the linear equality constraints \eqref{PF-SDPP:meq}, and  the dual matrix function $\mathbf{H}:\mathbb{R}^M\to\mathbb{H}^N$ is defined as
\begin{align}
\mathbf{H}(\boldsymbol{\mu}):=\bM_0 + \sum_{j = 1}^M \mu_j\bM_j.\label{Hdef}
\end{align}
If strong duality holds while the primal and dual problems both attain their solutions, then every pair of optimal primal-dual solutions $(\mathbf{X}^{\mathrm{opt}},\boldsymbol{\mu}^{\mathrm{opt}})$ satisfies the relation $\bH(\boldsymbol{\mu}^{\mathrm{opt}}) \bX^{\mathrm{opt}}= \mathbf{0}$, due to the complementary slackness.
Hence,  if $\rank(\bH(\boldsymbol{\mu}^{\mathrm{opt}})) = N-1$ holds,  then we have the inequality $\rank(\bX^{\mathrm{opt}}) \leq 1$ such that the SDP relaxation can recover a solution of the PF problem.

\begin{definition}[SDP recovery]
It is said that the SDP relaxation problem \eqref{PF-SDPP} recovers the voltage vector $\bv \in \mathbb{C}^N$ if $\bX=\bv\bv^{*}$ is the unique solution of \eqref{PF-SDPP} for some  input $\bz \in \mathbb{R}^M$.
\end{definition}

\begin{definition}[Dual certificate]
\label{dual_cer_def}
A vector $\boldsymbol{\mu}\in\mathbb{R}^M$ is regarded as a dual SDP certificate for the voltage vector $\mathbf{v}\in\mathbb{C}^N$ if it satisfies the following three properties:
\begin{align}\label{dual_cer}
\!\!\!\!
\mathbf{H}(\boldsymbol{\mu})\succeq \mathbf{0},\quad
\mathbf{H}(\boldsymbol{\mu})\mathbf{v}= \mathbf{0},\quad
\mathrm{rank}( \mathbf{H}(\boldsymbol{\mu}) )= N-1.\!\!
\end{align}
Denote the set of all dual SDP certificates for the voltage vector $\mathbf{v}$ as $\mathcal{D}(\mathbf{v})$.\vspace{-2mm}
\end{definition}

The SDP problem \eqref{PF-SDPP} can be further relaxed by replacing the high-order positive semidefinite constraint \eqref{PF-SDPP:cone} with second-order conic constraints on $2\times 2$ principal sub-matrices of $\mathbf{X}$ corresponding to certain lines of the network. This yields the SOCP relaxation:
\begin{subequations}\label{PF-SOCP}
	\begin{align}
	\mini_{\bX \in \mathbb{H}^N}\quad &\Tr(\bM_0\bX) \label{PF-SOCP:obj} \\
	\st\quad  &\Tr(\bM_j\bX) = z_j,\, &&\forall j\in \cM \label{PF-SOCP:meq} \\
	\quad &
	\begin{bmatrix}
	X_{s,s} & X_{s,t}\\
	X_{t,s} &X_{t,t}
	\end{bmatrix}
	\succeq  \mathbf{0}, &&\forall(s,t) \in \overline\cL,\label{PF-SOCP:cone}
	\end{align}
\end{subequations}
where $\overline \cL$ denotes the set of those edges of the network graph for which
the corresponding entry of $\mathbf{M}_{j}$ is nonzero for at least one index $j \in \{0,1,\ldots,M\}$.


\begin{definition}[SOCP recovery]
	It is said that the SOCP relaxation problem \eqref{PF-SOCP} recovers the voltage vector $\bv \in \mathbb{C}^N$ if there is some  input $\bz \in \mathbb{R}^M$ such that, for every solution $\bX^{\mathrm{opt}}$ of  \eqref{PF-SOCP}, those entries of the matrix $\bX^{\mathrm{opt}}-\bv\bv^{*}$ on the diagonal or corresponding   to the members of $\overline\cL$ are all equal to zero.
\end{definition}

\begin{figure}[t]
	\centering
	{\includegraphics[width=0.24\textwidth]{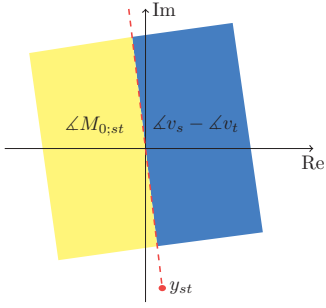}}
	\caption{The demonstration of the angle conditions \eqref{angleMy} and \eqref{angleV}. The acceptable
regions for the voltage phase difference $\measuredangle v_{s}-\measuredangle v_{t}$ (blue open half-space)
and the entry $M_{0;st}$ (yellow open half-space) are shown relative to the  branch admittance $y_{st}$ (red dot). }
	\label{fig:angleCond}
\end{figure}

\vspace{-2mm}

\section{Exact Recovery of Power Flow Solution}

The objective of this section is to show that the SDP problem~\eqref{PF-SDPP} is exact and the correct complex voltage  vector $\bv$ can be recovered for a class of nodal and branch noiseless measurements.
Let $\cG^{\prime} = (\cN,\cL^{\prime})$ denote an arbitrary  subgraph of $\mathcal G$ that contains a spanning tree of $\mathcal G$.
Throughout the rest of this section, we assume that the available measurements consist of:
(i) voltage magnitudes at all buses, and
(ii) active power flow at the ``from'' end of each line of $\cG^{\prime}$. Note that whenever the SDP relaxation is exact for this set of measurements, it remains exact if more measurements are available. Please refer to Corollary~\ref{coro:nodalM} and Remark~\ref{rem:r2} for more details.

The SDP relaxation of \eqref{PFP2} can be expressed as
\begin{subequations}\label{PF-SDPP-specM}
	\begin{align}
	\mini_{\bX \in\mathbb{H}^{N}}\quad &\Tr(\bM_0\bX) \label{PF-SDPP-specM:obj}\\
	\st\quad  &X_{k,k} = |v_k|^2, &&\forall k\in \cN \label{PF-SDPP-specM:meq_Node} \\
			  &\mathrm{Tr}(\mathbf{Y}_{l,p_f}\mathbf{X})=p_{l,f}, &&\forall l\in  \cL^{\prime}
			  \label{PF-SDPP-specM:meq_Branch}\\
			  &\mathbf{X}\succeq \mathbf{0}.
	\end{align}
\end{subequations}
Moreover, the SOCP relaxation of \eqref{PFP2} can be written as:
\begin{subequations}\label{PF-SOCP-specM}
	\begin{align}
	\mini_{\bX \in\mathbb{H}^{N}}\quad &\Tr(\bM_0\bX)\label{PF-SOCP-specM:obj}\\
	\st\quad  &X_{k,k} = |v_k|^2, &&\forall k\in \cN  \label{PF-SOCP-specM:meq_Node}\\
	&\mathrm{Tr}(\mathbf{Y}_{l,p_f}\mathbf{X})=p_{l,f}, &&\forall l\in \cL^{\prime}\label{PF-SOCP-specM:meq_Branch}\\
	\quad &
	\begin{bmatrix}
	X_{s,s} & X_{s,t}\\
	X_{t,s} &X_{t,t}
	\end{bmatrix}
	\succeq  \mathbf{0}, &&\forall(s,t) \in \cL^{\prime}.\label{PF-SOCP-specM:cone}
	\end{align}
\end{subequations}

\begin{definition}[Sparsity graph]
Given a Hermitian matrix $\mathbf{W}\in\mathbb H^N$,
the sparsity graph of $\mathbf{W}$, denoted by $\mathscr{G}(\bW)$, is a simple undirected graph with the vertex set  $\{1,2,\ldots,N\}$ such that every two distinct vertices $i$ and $j$ are connected to each other  if  and only if the $(i,j)$ entry of $\mathbf{W}$ is nonzero.
\end{definition}

\begin{assumption}\label{asmp1}
The edge set of $\mathscr{G}(\mathbf{M}_0)$ coincides with $\cL^{\prime}$ and in addition,
\begin{align} \label{angleMy}
-180^{\circ}<\measuredangle M_{0;st}-\measuredangle y_{st}<0,\quad \forall (s,t)\in\cL^{\prime},
\end{align}
where $M_{0;st}$ denotes the $(s,t)$ entry of $\bM_0$.
Moreover, the solution $\mathbf{v}$ being sought satisfies the relations
\begin{subequations}
\label{angleV}
\begin{align}
\hspace{-0.2cm}0<(\measuredangle v_{s}-\measuredangle v_{t})-\measuredangle y_{st}<180^{\circ},\quad \forall (s,t)\in\cL^{\prime} \label{angleV1}\\
\hspace{-0.2cm}(\measuredangle v_{s}-\measuredangle v_{t})-\measuredangle M_{0;st}\neq 0 \, \ \mathrm{or}\, \ 180^{\circ},\quad \forall (s,t)\in\cL^{\prime}. \label{angleV12}
\end{align}
\end{subequations}
\end{assumption}
To reduce power losses, real-world transmission systems feature low R/X ratios (the ratio of line resistance to reactance).
The angle of the line admittance $\measuredangle y_{st}$ is therefore close to $-90^{\circ}$ \cite[Sec. 3.7]{Weedy12}.
Meanwhile, since the transferred real power is proportional to its corresponding voltage angle difference, 
the number $|\measuredangle v_{s}-\measuredangle v_{t}|$ is typically small due to thermal and stability limits \cite{GG13,Andersson08}.
Hence, the angle condition \eqref{angleV1} is expected to hold. For lossless networks,  \eqref{angleV1} requires each line voltage angle difference to be between $-90^{\circ}$ and $90^{\circ}$, which is a very practical assumption.  The acceptable regions for $\measuredangle v_{s}-\measuredangle v_{t}$
and $M_{0;st}$ are shown in Figure \ref{fig:angleCond}. It can be observed that one convenient choice for the matrix $\bM_{0}$ is to select its entries $M_{0;st}$  as  complex numbers with negative real and imaginary parts.

\begin{lemma}
\label{lem:dualcertf}
Under Assumption~\ref{asmp1}, there exists a dual SDP certificate for the voltage vector $\bv \in \mathbb{C}^N$.
\end{lemma}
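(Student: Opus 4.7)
The plan is to construct $\boldsymbol{\mu}$ explicitly and then verify the three certificate conditions by a diagonal congruence to a weighted graph Laplacian. Notice that the measurement matrices involved are $\{\bE_k\}_{k\in\cN}$ (which touch only the diagonal) and $\{\bY_{l,p_{f}}\}_{l\in\cL'}$ (which, for line $l=(s,t)$, affect only the $(s,t)$ and $(t,s)$ off-diagonal entries plus the corresponding incident diagonal). In particular, the branch multiplier $\mu_l$ slides the off-diagonal $H_{st}(\boldsymbol{\mu})$ along the affine line $M_{0;st}-\tfrac{1}{2}\mu_l y_{st}$ in $\mathbb{C}$, while each nodal multiplier $\mu_k$ shifts only $H_{kk}$ by a real number and is otherwise free.

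First I would pick the branch multipliers edge-by-edge. For every $(s,t)\in\cL'$, I would choose the unique $\mu_l\in\mathbb{R}$ that makes $\bar v_s H_{st} v_t$ a strictly negative real number. Assumption~\ref{asmp1} is exactly what enables this: by \eqref{angleMy} the affine family $\{M_{0;st}-\tfrac{1}{2}\mu_l y_{st}:\mu_l\in\mathbb{R}\}$ avoids the origin and its angular image is exactly the open half-plane $(\measuredangle y_{st}-180^\circ,\measuredangle y_{st})$; the target angle $180^\circ+\measuredangle v_{s}-\measuredangle v_{t}$ demanded by negativity lies inside this half-plane by \eqref{angleV1}; and condition \eqref{angleV12} excludes the degeneracy in which $\mu_l=0$ would already give a zero (rather than strictly negative) value. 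Once all $\mu_l$ are fixed, I would set each $\mu_k$ so that the $k$-th row of $\mathbf{H}(\boldsymbol{\mu})\bv=\mathbf{0}$ holds, namely $H_{kk}=-|v_k|^{-2}\sum_{t:(k,t)\in\cL'}\bar v_k H_{kt} v_t$; by the previous step the right-hand side is a positive real number, so the choice is valid.

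Finally I would verify semidefiniteness and rank through a diagonal congruence. Let $\bD:=\diag(\bv)$ and put $\mathbf{L}:=\bD^{*}\mathbf{H}(\boldsymbol{\mu})\bD$. By construction, $\mathbf{L}$ is Hermitian with negative real off-diagonals supported exactly on $\cL'$, positive real diagonals, and vanishing row sums; in other words, $\mathbf{L}$ is the Laplacian of a weighted graph on $\cG'$ with strictly positive edge weights $w_{st}=-\bar v_s H_{st} v_t$. Since $\cG'$ contains a spanning tree and is therefore connected, standard Laplacian theory yields $\mathbf{L}\succeq\mathbf{0}$, $\mathrm{rank}(\mathbf{L})=N-1$, and $\mathrm{null}(\mathbf{L})=\mathrm{span}(\mathbf{1})$. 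Because $\bD$ is invertible (each $v_k\neq 0$ by \eqref{angleV1}), all three properties transfer back to $\mathbf{H}(\boldsymbol{\mu})$: positive semidefiniteness, rank $N-1$, and nullspace $\bD\cdot\mathrm{span}(\mathbf{1})=\mathrm{span}(\bv)$.

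The main obstacle I anticipate is the angular bookkeeping that confirms the $180^\circ$-wide angular image of the affine family is precisely $(\measuredangle y_{st}-180^\circ,\measuredangle y_{st})$ and that the required target angle lies strictly inside it; this is where Assumption~\ref{asmp1} is used in full force. Once this geometric reduction is carried out, everything else is a clean appeal to the PSD structure of a connected weighted-graph Laplacian.
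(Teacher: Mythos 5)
Your construction is correct and, at its core, it builds the same certificate as the paper: you pick the branch multiplier of each line $(s,t)\in\cL^{\prime}$ so that the $(s,t)$ entry of $\mathbf{H}(\boldsymbol{\mu})$ acquires the phase $180^{\circ}+\measuredangle v_{s}-\measuredangle v_{t}$ (the paper does this through the closed-form formulas \eqref{eq:mul}), and you then use the nodal multipliers to annihilate $\mathbf{H}(\boldsymbol{\mu})\bv$ row by row. Where you genuinely depart from the paper is in the verification. The paper decomposes $\mathbf{H}(\hat{\boldsymbol{\mu}})$ into per-edge matrices $\widehat{\mathbf{H}}^{(l)}$ supported on a $2\times 2$ block, checks $\widehat{\mathbf{H}}^{(l)}\succeq\mathbf{0}$ and $\widehat{\mathbf{H}}^{(l)}\bv=\mathbf{0}$ for each, and then obtains $\rank(\mathbf{H}(\hat{\boldsymbol{\mu}}))=N-1$ by propagating the ratio $x_s/x_t=v_s/v_t$ along the connected subgraph $\cG^{\prime}$. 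You instead apply the single diagonal congruence $\mathbf{L}=\bD^{*}\mathbf{H}(\boldsymbol{\mu})\bD$ with $\bD=\diag(\bv)$ and recognize $\mathbf{L}$ as the Laplacian of a connected weighted graph with positive edge weights $-v_s^{*}H_{st}v_t$, which delivers positive semidefiniteness, nullity one, and the nullspace $\mathrm{span}(\bv)$ in one stroke; the paper's ratio-propagation step is precisely the standard connected-Laplacian nullity argument in disguise. Your geometric existence argument for the branch multipliers (the angular image of the affine family $M_{0;st}-\tfrac{1}{2}\mu_l y_{st}$ is the open sector $(\measuredangle y_{st}-180^{\circ},\measuredangle y_{st})$, which contains the target angle by \eqref{angleV1}) is a clean substitute for the explicit formulas and, as a bonus, does not invoke \eqref{angleV12} at all. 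One small quibble: your stated role for \eqref{angleV12} is off --- taking $\mu_l=0$ can never produce a zero off-diagonal entry since $M_{0;st}\neq 0$; that condition protects the paper's explicit formulas rather than your construction --- but this does not affect the validity of your proof.
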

\begin{proof}
The proof is provided in  Appendix~\ref{appendix:dualcertf}.
\end{proof}

\begin{theorem}
\label{thm:tightrelax}
Under Assumption~\ref{asmp1}, the SDP relaxation \eqref{PF-SDPP-specM} and the  SOCP relaxation~\eqref{PF-SOCP-specM} both recover the voltage vector $\bv \in \mathbb{C}^N$.
\end{theorem}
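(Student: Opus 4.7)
The plan is to invoke Lemma~\ref{lem:dualcertf} to obtain a dual SDP certificate $\bmu \in \mathcal{D}(\bv)$ and then use strong duality and complementary slackness. For the SDP, note that $\bX = \bv\bv^{*}$ is primal feasible for \eqref{PF-SDPP-specM} since the measurements were generated from $\bv$. Pre- and post-multiplying $\bH(\bmu)\bv = \mathbf{0}$ by $\bv^{*}$ and $\bv$ yields
$$0 = \bv^{*}\bH(\bmu)\bv = \Tr(\bM_0\bv\bv^{*}) + \sum_{j=1}^M \mu_j z_j = \Tr(\bM_0\bv\bv^{*}) + \bz^{\top}\bmu,$$
so the primal value at $\bv\bv^{*}$ matches the dual value $-\bz^{\top}\bmu$, and both are optimal by weak duality. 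For uniqueness, any primal optimum $\bX^{\mathrm{opt}}$ obeys $\bH(\bmu)\bX^{\mathrm{opt}} = \mathbf{0}$ by complementary slackness; because $\rank(\bH(\bmu)) = N-1$ with null space $\mathrm{span}(\bv)$, we obtain $\bX^{\mathrm{opt}} = \alpha\bv\bv^{*}$ for some $\alpha \geq 0$, and the diagonal constraints $X^{\mathrm{opt}}_{kk} = |v_k|^2$ pin $\alpha = 1$.

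For the SOCP, the SDP feasible set is contained in the SOCP feasible set, so the SOCP optimum is no larger than $-\bz^{\top}\bmu$. For the reverse inequality, I would exploit the sparsity of $\bH(\bmu)$: every $\bM_j$ and $\bM_0$ has off-diagonal support contained in $\cL^{\prime}$ by construction of the measurement matrices and Assumption~\ref{asmp1}. Fix a spanning tree $\cT \subseteq \cL^{\prime}$; by the standard clique decomposition for a PSD matrix with tree-like chordal sparsity, $\bH(\bmu)$ can be written as a diagonal PSD matrix plus a sum of $2\times 2$ PSD blocks $\bH_{st}$ supported on the edges of $\cT$, with any residual support on $\cL^{\prime}\setminus\cT$ absorbed via the branch flow measurement relations. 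Each such summand contributes a non-negative trace against any SOCP-feasible $\bX$ by \eqref{PF-SOCP-specM:cone}, hence $\Tr(\bH(\bmu)\bX) \geq 0$, which is equivalent to $\Tr(\bM_0\bX) \geq -\bz^{\top}\bmu$. Therefore $\bv\bv^{*}$ is SOCP optimal as well. For any other SOCP optimum $\bX^{\mathrm{opt}}$, the identity $\Tr(\bH(\bmu)\bX^{\mathrm{opt}}) = 0$ forces every edgewise inner product $\Tr(\bH_{st}\bX^{\mathrm{opt}})$ to vanish, which together with the diagonal equalities and the $2\times 2$ PSD blocks pins down $X^{\mathrm{opt}}_{s,t} = v_s v_t^{*}$ along each tree edge, and then along every $(s,t)\in\overline{\cL}$ via the branch-flow equations \eqref{PF-SOCP-specM:meq_Branch}.

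The main obstacle is the SOCP uniqueness step: showing that the vanishing of $\Tr(\bH_{st}\bX^{\mathrm{opt}})$ actually fixes the off-diagonal entry rather than merely restricting the $2\times 2$ minor to a lower-dimensional PSD set. This requires that the $2\times 2$ blocks $\bH_{st}$ arising from the decomposition have rank one, with a distinguished null direction determined by $(v_s,v_t)$. Such a property should follow from the explicit construction of $\bmu$ in the proof of Lemma~\ref{lem:dualcertf} together with the strict angle inequalities \eqref{angleMy}--\eqref{angleV} in Assumption~\ref{asmp1}, which prevent the off-diagonal entries of $\bH(\bmu)$ from vanishing on any edge of $\cL^{\prime}$ and ensure that $\bv$ spans the whole null space of $\bH(\bmu)$. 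Dispatching this rank-one structure edge by edge is where the angle assumption does its real work.
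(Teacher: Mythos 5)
Your SDP argument is correct and essentially matches the paper's: obtain $\hat{\bmu}\in\mathcal{D}(\bv)$ from Lemma~\ref{lem:dualcertf}, show the primal value at $\bv\bv^{*}$ equals the dual value via $\bv^{*}\bH(\hat{\bmu})\bv=0$, and use complementary slackness together with $\rank(\bH(\hat{\bmu}))=N-1$ and the diagonal constraints to get uniqueness.

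The gap is in the SOCP half. You decompose $\bH(\hat{\bmu})$ into $2\times 2$ PSD blocks supported only on a spanning tree $\cT\subseteq\cL^{\prime}$ and propose to have the residual support on $\cL^{\prime}\setminus\cT$ ``absorbed via the branch flow measurement relations.'' This cannot work as stated: under Assumption~\ref{asmp1} every entry $M_{0;st}$ with $(s,t)\in\cL^{\prime}$ is nonzero, so the sparsity pattern of $\bH(\hat{\bmu})$ is all of $\cL^{\prime}$, and a sum of tree-edge blocks plus a diagonal matrix cannot reproduce the off-tree entries --- there is no sense in which measurement equations absorb entries in a matrix identity. Moreover, when $\cL^{\prime}$ contains cycles its sparsity graph need not be chordal, so the generic clique-decomposition theorem you invoke is not available. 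Your fallback for off-tree edges --- recovering $X^{\mathrm{opt}}_{s,t}$ from \eqref{PF-SOCP-specM:meq_Branch} --- also fails, since each branch-flow equation is a single real constraint on the complex unknown $X_{s,t}$ and cannot determine it. The paper avoids all of this because the proof of Lemma~\ref{lem:dualcertf} already \emph{constructs} $\bH(\hat{\bmu})=\sum_{l\in\cL^{\prime}}\widehat{\bH}^{(l)}$ as a sum of PSD matrices, one per edge of $\cL^{\prime}$ (not just per tree edge), each supported on a single $2\times2$ principal submatrix with null space spanned by $[v_s\;\, v_t]^{\top}$; taking these blocks as the SOCP dual multipliers gives complementary slackness edge by edge over all of $\cL^{\prime}$, and the rank-one property you flagged as the ``main obstacle'' is exactly what the explicit formulas \eqref{eq:mul} together with \eqref{angleV12} deliver. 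So your instinct about where the angle conditions do their work is right, but the edgewise decomposition must be taken from the certificate's construction rather than from a chordal decomposition restricted to a spanning tree.
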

\begin{proof}
The proof  is provided in  Appendix~\ref{appendix:tightrelax}.~\end{proof}
To be able to recover a large set of voltage vectors, Theorem~\ref{thm:tightrelax} implies that there are infinitely many choices for the objective function of the SDP relaxation, namely all matrices $\bM_0$ satisfying Assumption~\ref{asmp1}. Now, consider the case with  extra nodal measurements
\begin{subequations}\label{nodalPQ}
\begin{align}
\Tr(\bY_{k,p}\bX) &= p_k, \quad\ \forall k\in \cN_p \\
\Tr(\bY_{k^{\prime},q}\bX) &= q_{k^\prime}, \quad \forall k^{\prime}\in \cN_q.
\end{align}
\end{subequations}
The next corollary shows that the property of the exact relaxation is preserved in presence of
these arbitrary extra power injection measurements. As will be studied later in the paper, the availability of extra measurements seems unnecessary for the PF problem, but  is instrumental in recovering the state of the system in the noisy setup.

\begin{corollary}
\label{coro:nodalM}
Under Assumption \ref{asmp1}, the SDP relaxation \eqref{PF-SDPP-specM} and the SOCP relaxation~\eqref{PF-SOCP-specM} with the
additional constraints of power injection measurements \eqref{nodalPQ}
both recover the voltage vector $\bv \in \mathbb{C}^N$.
\begin{proof}
With extra nodal power measurements, $\mathbf{X}=\mathbf{v}\mathbf{v}^{\ast}$ still remains feasible for both problems. Therefore the corollary comes as a direct result of Theorem~\ref{thm:tightrelax}.
\end{proof}
\end{corollary}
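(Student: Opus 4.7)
The plan is to derive the corollary as a one-line consequence of Theorem~\ref{thm:tightrelax} by exploiting the fact that adding constraints to a minimization problem can only shrink the feasible set, and hence cannot introduce new optima that were absent before. Concretely, I would argue that the true lifted matrix $\mathbf{X}=\mathbf{v}\mathbf{v}^{*}$ already satisfies every augmented constraint in \eqref{nodalPQ}, so it remains feasible for both the SDP and SOCP programs after augmentation; since it was the \emph{unique} optimizer of the less constrained relaxations by Theorem~\ref{thm:tightrelax}, it must still be the unique optimizer of the more constrained ones.

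The first step is a feasibility check: for each $k\in\cN_p$ and each $k'\in\cN_q$, the identities in \eqref{nodalQTY}, namely $p_k=\Tr(\bY_{k,p}\mathbf{v}\mathbf{v}^{*})$ and $q_{k'}=\Tr(\bY_{k',q}\mathbf{v}\mathbf{v}^{*})$, show that $\mathbf{X}=\mathbf{v}\mathbf{v}^{*}$ trivially satisfies \eqref{nodalPQ} whenever the right-hand sides $p_k,q_{k'}$ are generated from the true voltage $\mathbf{v}$ (which is the noiseless PF setting here). Hence $\mathbf{v}\mathbf{v}^{*}$ lies in the feasible set $\cF^{+}$ of the augmented program, and trivially $\cF^{+}\subseteq\cF$ where $\cF$ is the feasible set of \eqref{PF-SDPP-specM} (respectively \eqref{PF-SOCP-specM}).

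The second step is the uniqueness transfer. Theorem~\ref{thm:tightrelax} furnishes $\mathbf{v}\mathbf{v}^{*}$ as the unique minimizer of the objective $\Tr(\mathbf{M}_0\mathbf{X})$ over $\cF$. Since $\mathbf{v}\mathbf{v}^{*}\in\cF^{+}\subseteq\cF$, any optimizer of the augmented problem is also an optimizer over $\cF$ (its objective value cannot be smaller, as $\cF^{+}\subseteq\cF$, and cannot be larger either, since $\mathbf{v}\mathbf{v}^{*}$ is feasible in $\cF^{+}$ with the optimal value). By uniqueness over $\cF$, the augmented minimizer must coincide with $\mathbf{v}\mathbf{v}^{*}$. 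The identical argument applies verbatim to the SOCP relaxation because the SOCP feasible set also shrinks when \eqref{nodalPQ} is imposed, and the notion of SOCP recovery only concerns the diagonal and $\overline{\cL}$-entries of the solution, which are determined by the same uniqueness reasoning.

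There is essentially no hard step here; the corollary is a monotonicity-under-restriction argument. If one wanted a self-contained dual-certificate-style proof, the only mild subtlety would be to note that the SDP dual certificate $\boldsymbol{\mu}\in\mathcal{D}(\mathbf{v})$ constructed in Lemma~\ref{lem:dualcertf} can be extended to a certificate for the augmented program simply by appending zeros in the positions of the new multipliers corresponding to \eqref{nodalPQ}, since $\mathbf{H}(\boldsymbol{\mu})$ is unchanged and complementary slackness remains intact.
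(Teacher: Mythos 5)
Your proposal is correct and follows essentially the same route as the paper: the paper's proof likewise observes that $\mathbf{v}\mathbf{v}^{*}$ remains feasible under the added injection constraints and invokes Theorem~\ref{thm:tightrelax}, relying on the same restriction-of-the-feasible-set reasoning you spell out. Your extra remarks (the explicit uniqueness-transfer step and the zero-padded dual certificate) are fine elaborations but not a different argument.
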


\begin{figure}[t]
\centering
\includegraphics[width=0.27\textwidth]{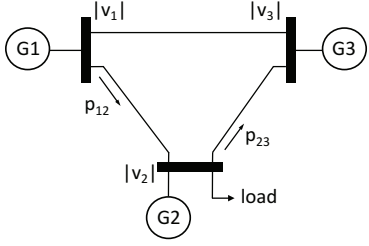}
\caption{A 3-bus power network with the voltage magnitude measurements $|v_1|$, $|v_2|$ and $|v_3|$,
as well as the branch active power measurements $p_{12}$ and $p_{23}$.}
\label{fig:3bus}
\vspace{-0.4cm}
\end{figure}

\vspace{-2mm}

\subsection{Effect of Reactive Power Branch Measurements}

In the preceding section, the exactness of the SDP and SOCP relaxations were studied in the case with the measurement of branch active power flows. In what follows, it will be shown that reactive power line flows do not offer the same benefits as active power measurements.  Assume that, as opposed to the active power flow, the reactive  power flow at the ``from'' end of each branch of $\cG^{\prime}$ is measured .
In this case, Theorem~\ref{thm:tightrelax} still holds if the conditions provided in Assumption~\ref{asmp1} are replaced by:
\begin{subequations}
\begin{align}
	\mathrm{Re}(M_{0;st}y^{\ast}_{st})\neq 0 \quad \mathrm{and} \quad
	\mathrm{Im}(v_sv^{\ast}_t M^{\ast}_{0;st}) &\neq 0 \\
	\mathrm{Re}(v_sv^{\ast}_t y^{\ast}_{st})\mathrm{Re}(M_{0;st}y^{\ast}_{st}) & \leq 0. \label{recond2Q}
\end{align}
\end{subequations}
In contrast to the case with the measurements of $p_{l,f}$, the following two different scenarios must be considered for \eqref{recond2Q}
\begin{itemize}
\item[(i):] if $90^{\circ}<(\measuredangle v_{s}-\measuredangle v_{t})-\measuredangle y_{st}\leq  180^{\circ}$, then $\mathrm{Re}(v_sv^{\ast}_t y^{\ast}_{st})<0$
and  $\mathrm{Re}(M_{0;st}y^{\ast}_{st})>0$, which imply that
\begin{align}
-90^{\circ} \leq \measuredangle M_{0;st}-\measuredangle y_{st}\leq  90^{\circ},
\end{align}
\item[(ii):] if $0 \leq (\measuredangle v_{s}-\measuredangle v_{t})-\measuredangle y_{st} < 90^{\circ} $, then $\mathrm{Re}(v_sv^{\ast}_t y^{\ast}_{st})>0$
and  $\mathrm{Re}(M_{0;st}y^{\ast}_{st})<0$, which imply that
\begin{align}
90^{\circ} \leq \measuredangle M_{0;st}-\measuredangle y_{st} \leq 270^{\circ}.
\end{align}
\end{itemize}
As a result,  $\measuredangle M_{0;st}$ must belong to one of the two complementary intervals
$[\measuredangle y_{st}+90^{\circ}, \measuredangle y_{st}+270^{\circ}]$ and $[\measuredangle y_{st}-90^{\circ}, \measuredangle y_{st}+90^{\circ}]$,
depending on the value of $\measuredangle v_{s}-\measuredangle v_{t}$. Therefore, it is impossible to design the matrix $\bM_{0}$ in advance
without knowing the phase angle difference $\measuredangle v_{s}-\measuredangle v_{t}$.

\begin{remark} \label{rem:r2}
We assume that available measurements include the voltage magnitude at each bus and active line flows over at least a spanning tree of the power network.
Such an assumption is realistic in practical power systems since these two types of measurements are typically provided by the SCADA system with little incremental cost \cite{Korres2011},
while also used for conventional static state estimation algorithms \cite{Phadke08}.
Another source of voltage magnitude measurements comes from the increasing usage of PMUs.
Moreover, the selection of line power flow measurements features several advantages \cite{Dopazo70,Porretta73}:
\begin{itemize}
 \item The spanning tree line flow measurements ensure the network observability \cite{Abur99,WuKK06}.
  \item The line flow measurements can be directly used  for monitoring, which is of practical importance.
  \item Measurements at both ends of lines are very effective in detecting and identifying incorrect data.
  \item The numerical computation is fast and stable, while the results are less sensitive to measurement errors.
\end{itemize}
Nevertheless, the above assumption on the types of measurements  is not essential for the validity of the proposed convexification framework. In other words, this framework can be deployed for arbitrary measurements, but we study its performance under the above assumption. It is worth stressing that, similar to the aforementioned PF problem, additional measurements such as nodal power injections 
can be readily incorporated in our framework for PSSE. 
\end{remark}

\subsection{Three-Bus Example}

Consider the 3-bus power system shown in Figure~\ref{fig:3bus}.
Suppose that the measured signals consist of  the two active power line flows $p_{12}$ and $p_{23}$, as well as the nodal voltage squared magnitudes
$|v_1|^2$, $|v_2|^2$ and $|v_3|^2$. Theorem~\ref{thm:tightrelax} states that the SDP and SOCP relaxation problems~\eqref{PF-SDPP-specM} and \eqref{PF-SOCP-specM} are both able to find the unknown voltage vector $\bv$, using an appropriately designed coefficient matrix $\bM_0$. It turns out that $\bv$ can also be found through a direct calculation. More precisely, one can write
\begin{subequations}\label{eq:ptheta}
\begin{align}
  p_{12} &= \re(v_1(v_1-v_2)^{*}y_{12}^{*})= |v_1|^2\re(y_{12})  \notag \\
         &- |v_1||v_2||y_{12}|\cos(\measuredangle v_{1}-\measuredangle v_{2}-\measuredangle y_{12}) \\
  p_{23} &= \re(v_2(v_2-v_3)^{*}y_{23}^{*}) = |v_2|^2\re(y_{23})\notag \\
      &    - |v_2||v_3||y_{23}|\cos(\measuredangle v_{2}-\measuredangle v_{3}-\measuredangle y_{23}),
\end{align}
\end{subequations}
which yields that
\begin{subequations}\label{eq:vtheta}
\begin{align}
\measuredangle v_{1}-\measuredangle v_{2}&= \arccos\left(\frac{p_{12}-|v_1|^2\re(y_{12})}{|v_1||v_2||y_{12}|}\right) +\measuredangle y_{12} \\
\measuredangle v_{2}-\measuredangle v_{3} &= \arccos\left(\frac{p_{23}-|v_2|^2\re(y_{23})}{|v_2||v_3||y_{23}|}\right)+ \measuredangle y_{23}.
\end{align}
\end{subequations}
Each phase difference $\measuredangle v_{1}-\measuredangle v_{2}$ or $\measuredangle v_{2}-\measuredangle v_{3}$ can have two possible solutions,
but only one of them satisfies the angle condition \eqref{angleV1}. Hence, all complex voltages can be readily recovered.
This argument applies to general power networks. In other words, without resorting to the relaxed problems~\eqref{PF-SDPP-specM} and \eqref{PF-SOCP-specM},
the PF problem considered in this paper can be directly solved by the calculation of phase angles.
However, once the measurements are noisy, the equations \eqref{eq:vtheta} cannot be used because the exact values of the quantities $p_{12}$, $p_{23}$, $|v_1|^2$, $|v_2|^2$ and $|v_3|^2$ are no longer available since they are corrupted by noise.
In contrast, the proposed SDP and SOCP relaxations work in both noiseless and noisy cases. This will be elaborated in the next section.

As a byproduct of the discussion made above, one can obtain the following result.

\begin{corollary}
\label{cor:c2}
The PF problem has a unique solution satisfying  Assumption~\ref{asmp1}. Moreover, this solution can be recovered using the SDP relaxation \eqref{PF-SDPP-specM} and the  SOCP relaxation~\eqref{PF-SOCP-specM}.
\end{corollary}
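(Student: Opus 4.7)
The plan is to decompose Corollary~\ref{cor:c2} into two claims and handle them separately: (i) any PF solution compatible with Assumption~\ref{asmp1} is unique, and (ii) that unique solution is recovered by both relaxations. Claim (ii) is a direct corollary of Theorem~\ref{thm:tightrelax} (or Corollary~\ref{coro:nodalM} if additional injection measurements are available), since the theorem guarantees exact recovery of every voltage vector meeting Assumption~\ref{asmp1}. The bulk of the work therefore reduces to establishing (i).

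For uniqueness I would generalize the explicit three-bus computation of \eqref{eq:ptheta}--\eqref{eq:vtheta} to an arbitrary network. The nodal voltage magnitudes $|v_k|$ are pinned down directly by the measurements for every $k\in\cN$. After normalizing the reference phase $\measuredangle v_{\mathrm{ref}}=0$ at the slack bus, determining $\bv$ reduces to determining each angle difference $\measuredangle v_s - \measuredangle v_t$ along the edges $(s,t)\in\cL^{\prime}$. Expanding the ``from''-end active power flow in polar form yields
\[
\cos\bigl((\measuredangle v_s - \measuredangle v_t) - \measuredangle y_{st}\bigr) \;=\; \frac{|v_s|^2\,\re(y_{st})-p_{l,f}}{|v_s|\,|v_t|\,|y_{st}|},
\]
with the right-hand side entirely specified by the data. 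The key observation is that \eqref{angleV1} forces the shifted difference $(\measuredangle v_s-\measuredangle v_t)-\measuredangle y_{st}$ to lie in the open interval $(0,180^{\circ})$, on which $\cos(\cdot)$ is strictly decreasing and hence injective. Each edge difference is therefore uniquely pinned down, and since $\cG^{\prime}$ contains a spanning tree of $\cG$, a straightforward induction on the tree distance from the slack bus propagates these uniquely determined differences to a unique phase at every bus. Combined with the fixed magnitudes this yields a unique voltage vector $\bv$, proving (i).

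The step requiring the most care is verifying that the right-hand side of the cosine equation is genuinely a valid cosine value, i.e., lies in $(-1,1)$ so that the $\arccos$ is defined. This is not an extra hypothesis but a consequence of the postulated existence of a PF solution satisfying the \emph{strict} inequalities of \eqref{angleV1}: for such a solution the right-hand side equals $\cos\phi$ with $\phi\in(0,180^{\circ})$, which automatically lies in $(-1,1)$. Once this mild consistency check is dispatched, the spanning-tree propagation proceeds without obstruction and the recovery claim follows directly from Theorem~\ref{thm:tightrelax} applied to the unique $\bv$ isolated above.
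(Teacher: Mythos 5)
Your proposal is correct and follows essentially the same route as the paper: the paper derives this corollary as a ``byproduct'' of the three-bus discussion, i.e., by noting that each measured ``from''-end active flow determines $\cos\bigl((\measuredangle v_s-\measuredangle v_t)-\measuredangle y_{st}\bigr)$, that condition \eqref{angleV1} selects the unique branch of the arccosine, and that propagating along the spanning subgraph $\cG^{\prime}$ together with the fixed magnitudes and Theorem~\ref{thm:tightrelax} yields uniqueness and recovery. Your write-up merely makes explicit the monotonicity of cosine on $(0,180^{\circ})$ and the range check on the arccosine argument, which the paper leaves implicit.
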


\vspace{-2mm}
\section{Convexification of State Estimation Problem}

Consider the PSSE as a generalization of the PF problem, where  the measurements are subject to noise. As explained in Corollary~\ref{cor:c2}, the unknown solution $\bold v$ is unique under Assumption~\ref{asmp1}.
To find this solution,  consider the optimization problem:
\begin{subequations}\label{prob:PSSE2}
\begin{align}
\mini_{\bv \in \mathbb{C}^N,\, \bnu \in \mathbb{R}^M}\quad &f(\bnu) \\
\st\quad  &z_j - \bv^{*}\bM_j\bv = \nu_j,\quad \forall j\in \cM, \label{PSSE2:constraint}
\end{align}
\end{subequations}
where $\bnu:=[\nu_1,\ldots,\nu_M]^{\top}$ and the function $f(\cdot)$ quantifies the estimation criterion.
Common choices of $f(\cdot)$ are the weighted $\ell_1$ and $\ell_2$ norm functions:
\begin{align}
f_{\mathrm{WLAV}}(\bnu)  & = \frac{|\nu_1|}{\sigma_1} + \frac{|\nu_2|}{\sigma_2} + \cdots + \frac{|\nu_M|}{\sigma_M}\\
f_{\mathrm{WLS}}(\bnu)  & = \frac{\nu_1^2}{\sigma_1^2} + \frac{\nu_2^2}{\sigma_2^2} + \cdots + \frac{\nu_M^2}{\sigma_M^2},
\end{align}
where $\sigma_1,...,\sigma_M$ are positive constants. 


\begin{remark}
The above functions correspond to the weighted least absolute value (WLAV) and weighted least square (WLS) estimators, which arise as the maximum likelihood estimator when the noises have a Laplace or normal distribution, respectively. Note that possible outliers in the measurements can be better modeled by the Laplace distribution that features heavier tails than the normal. Consequently, the WLAV estimator is more robust to the outliers. On the contrary, the non-robustness of the WLS estimator is primarily attributed to the squared distance because outliers with large residuals can have a high influence to skew the regression.
\end{remark}


Due to the inherent quadratic relationship between the voltage vector $\bv$ and the measured quantities $\{|v_i|^2,\bp,\bq,\bp_l,\bq_l\}$, the
quadratic equality constraints \eqref{PSSE2:constraint} make the problem \eqref{prob:PSSE2}
non-convex and NP-hard in general. To remedy this drawback, consider the penalized SDP relaxation
\begin{subequations}\label{PSSE-SDPP}
\begin{align}
\mini_{\bX \in \mathbb{H}^N, \bnu \in \mathbb{R}^M}\quad & \rho f(\bnu)+\Tr(\bM_0\bX) \\
\st\quad  &\Tr(\bM_j\bX)+\nu_j = z_j,\quad \forall j\in \cM \\
\quad & \bX \succeq \mathbf{0},
\end{align}
\end{subequations}
where $\rho>0$ is a pre-selected coefficient that balances the data fitting cost $f(\bnu)$ with the convexification term
$\Tr(\bM_0\bX)$.  The latter term is inherited from the SDP relaxation for the PF problem to deal with the non-convexity of the power flow equations. Similarly, a penalized  SOCP relaxation problem can be derived as
\begin{subequations}\label{PSSE-SOCP}
	\begin{align}
	\mini_{\bX \in \mathbb{H}^N, \bnu \in \mathbb{R}^M}\quad &\rho f(\bnu)+\Tr(\bM_0\bX) \label{PSSE-SOCP:obj} \\
	\st\quad  &\Tr(\bM_j\bX)+\nu_j = z_j,\, &&\forall\, j\in \cM \label{PSSE-SOCP:meq} \\
	\quad &
	\begin{bmatrix}
	X_{s,s} & X_{s,t}\\
	X_{t,s} &X_{t,t}
	\end{bmatrix}
	\succeq  \mathbf{0}, &&\forall~(s,t) \in \overline\cL,\label{PSSE-SOCP:cone}
	\end{align}
\end{subequations}
where $\overline \cL$ denotes the set of edges of the network graph for which
the corresponding entry of $\mathbf{M}_{j}$ is nonzero for at least one index $j \in \{0,1,\ldots,M\}$.
Based on the results derived earlier for  the PF problem,
we will next develop strong theoretical results on the estimation error for the PSSE.

\subsection{Bounded Estimation Error}
In this subsection, we assume that the function $f(\bnu)$ corresponds to the WLAV estimator, and that the available measurements consist of the voltage magnitudes at all buses and the active power flow at the ``from'' end of each line of $\cG^{\prime}$. The results to be presented next hold true in presence of extra  power measurements (see Remark~\ref{rem:r2}).
The penalized  problem~\eqref{PSSE-SDPP} can  be expressed as
\begin{align}\label{PSSE-SDPP2}
\min_{\bX \succeq \mathbf{0}}\, \Tr(\bM_0\bX)\!+\!\rho \sum_{j=1}^M \sigma_j^{-1}\left|\Tr\left(\bM_j(\bX\!-\!\bv\bv^{*})\right)\!-\!\eta_j\right|.
\end{align}
We aim to show that the solution of the penalized  relaxation  estimates the true solution of PSSE, where the estimation error is a function of the noise power. Define $\boldsymbol{\eta}$ as the vector of the noise values $\eta_1,..,\eta_M$.

%

\begin{theorem}\label{thm:rmse}
Suppose that  Assumption \ref{asmp1} holds. Consider an arbitrary dual SDP certificate $\hat{\boldsymbol{\mu}}\in\mathcal{D}(\mathbf{v})$, where $\bf v$ is the unique solution of the PSSE problem. Let $(\mathbf{X}^{\mathrm{opt}},\boldsymbol{\nu}^{\mathrm{opt}})$ denote an optimal solution of the penalized convex program \eqref{PSSE-SDPP} with $f(\bnu)=f_{\mathrm{WLAV}}(\bnu)$ and   a coefficient $\rho$ satisfying the inequality
\begin{align}\label{eq:rho}
\rho \geq \max_{j\in\cM} |\sigma_j\hat{\mu}_j|.
\end{align}  There exists a scalar $\beta >0$ such that
\begin{align} \label{rmse:Xopt}
\zeta := \frac{\|\mathbf{X}^{\mathrm{opt}}\! -\! \beta\mathbf{v}\mathbf{v}^{*}\|_F}{\sqrt{N\times \Tr({\bX}^{\mathrm{opt}})}}
\leq 2\sqrt{\frac{\rho \!\times\! f_{\mathrm{WLAV}}(\boldsymbol{\eta})}{N\lambda}},
\end{align}	
where $\lambda$ is the second smallest eigenvalue of the matrix $\mathbf{H}(\hat{\boldsymbol{\mu}})$.
\end{theorem}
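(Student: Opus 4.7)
The plan is to use the dual certificate $\hat{\bmu}$ as a pivot, with the key intermediate result being a trace-level bound of the form $\Tr(\bH(\hat{\bmu})\bX^{\mathrm{opt}})\le 2\rho f_{\mathrm{WLAV}}(\bmeta)$. To obtain it, I would first exploit that $(\bv\bv^{*}, \bmeta)$ is primal-feasible for \eqref{PSSE-SDPP} (since $z_{j} = \bv^{*}\bM_{j}\bv + \eta_{j}$), so the optimality of $(\bX^{\mathrm{opt}}, \bnu^{\mathrm{opt}})$ yields $\rho f_{\mathrm{WLAV}}(\bnu^{\mathrm{opt}}) + \Tr(\bM_{0}\bX^{\mathrm{opt}}) \le \rho f_{\mathrm{WLAV}}(\bmeta) + \Tr(\bM_{0}\bv\bv^{*})$. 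Substituting $\bM_{0} = \bH(\hat{\bmu}) - \sum_{j}\hat{\mu}_{j}\bM_{j}$, using $\bH(\hat{\bmu})\bv = \mathbf{0}$ from Definition~\ref{dual_cer_def}, and invoking the primal identity $\Tr(\bM_{j}(\bX^{\mathrm{opt}} - \bv\bv^{*})) = \eta_{j} - \nu_{j}^{\mathrm{opt}}$ rearranges this into
\[
\Tr(\bH(\hat{\bmu})\bX^{\mathrm{opt}}) \le \rho\bigl(f_{\mathrm{WLAV}}(\bmeta) - f_{\mathrm{WLAV}}(\bnu^{\mathrm{opt}})\bigr) + \sum_{j}\hat{\mu}_{j}(\nu_{j}^{\mathrm{opt}} - \eta_{j}).
\]
The triangle inequality together with $|\hat{\mu}_{j}|\le\rho/\sigma_{j}$ from hypothesis~\eqref{eq:rho} bounds the last sum by $\rho\bigl[f_{\mathrm{WLAV}}(\bnu^{\mathrm{opt}}) + f_{\mathrm{WLAV}}(\bmeta)\bigr]$, and cancellation delivers the central inequality $\Tr(\bH(\hat{\bmu})\bX^{\mathrm{opt}}) \le 2\rho f_{\mathrm{WLAV}}(\bmeta)$, which I will refer to as $(\star)$.

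For the second part, I would translate $(\star)$ into the Frobenius estimate in two stages. The certificate conditions imply $\bH(\hat{\bmu})\succeq \lambda\bigl(\bI - \bv\bv^{*}/\|\bv\|_{2}^{2}\bigr)$, so pairing with $\bX^{\mathrm{opt}}\succeq \mathbf{0}$ and setting $\beta := \bv^{*}\bX^{\mathrm{opt}}\bv/\|\bv\|_{2}^{4}$ turns $(\star)$ into $\Tr(\bX^{\mathrm{opt}}) - \beta\|\bv\|_{2}^{2} \le 2\rho f_{\mathrm{WLAV}}(\bmeta)/\lambda$. I would then decompose $\bX^{\mathrm{opt}}$ in the orthonormal basis anchored by $\bv/\|\bv\|_{2}$: the resulting $2\times 2$ block representation exhibits $\bX^{\mathrm{opt}} - \beta\bv\bv^{*}$ as a Hermitian matrix with zero $(1,1)$ block, off-diagonal vector $\mathbf{b}$, and PSD tail block $\bC$ whose trace equals the left-hand side above. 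The Schur-complement condition $\bX^{\mathrm{opt}}\succeq \mathbf{0}$ enforces $\|\mathbf{b}\|_{2}^{2}\le\beta\|\bv\|_{2}^{2}\Tr(\bC)$, while $\|\bC\|_{F}^{2}\le \|\bC\|_{\mathrm{op}}\Tr(\bC)\le \Tr(\bX^{\mathrm{opt}})\Tr(\bC)$. Summing $2\|\mathbf{b}\|_{2}^{2} + \|\bC\|_{F}^{2}$ and using $\beta\|\bv\|_{2}^{2}\le\Tr(\bX^{\mathrm{opt}})$ bounds $\|\bX^{\mathrm{opt}} - \beta\bv\bv^{*}\|_{F}^{2}$ by a numerical multiple of $\Tr(\bX^{\mathrm{opt}})\cdot\rho f_{\mathrm{WLAV}}(\bmeta)/\lambda$, and dividing by $N\,\Tr(\bX^{\mathrm{opt}})$ yields the claimed estimate on $\zeta$.

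The main obstacle is the algebraic cancellation that produces $(\star)$: three pieces — the null-space identity of the certificate, the primal constraint identity, and the absolute-value (rather than squared) structure of $f_{\mathrm{WLAV}}$ — must interlock precisely once the hypothesis $\rho \ge \max_{j}|\sigma_{j}\hat{\mu}_{j}|$ is invoked, which is why the parallel result does not hold for $f_{\mathrm{WLS}}$. A secondary technicality is ensuring the strict positivity $\beta > 0$ demanded by the statement: the construction $\beta = \bv^{*}\bX^{\mathrm{opt}}\bv/\|\bv\|_{2}^{4}$ vanishes only if $\bv\in\mathrm{null}(\bX^{\mathrm{opt}})$, and in that degenerate case $(\star)$ forces $\Tr(\bX^{\mathrm{opt}})\le 2\rho f_{\mathrm{WLAV}}(\bmeta)/\lambda$, so the stated bound is already satisfied with any arbitrarily small positive $\beta$.
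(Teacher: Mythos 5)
Your argument follows essentially the same route as the paper's: first the trace-level certificate bound $\Tr(\bH(\hat{\bmu})\bX^{\mathrm{opt}})\le 2\rho f_{\mathrm{WLAV}}(\bmeta)$, obtained from feasibility of $\bv\bv^{*}$, the identities $\bM_0=\bH(\hat{\bmu})-\sum_j\hat{\mu}_j\bM_j$ and $\bH(\hat{\bmu})\bv=\mathbf{0}$, and the hypothesis $\rho\ge\max_j|\sigma_j\hat{\mu}_j|$; then the conversion to a Frobenius bound via the block decomposition in the eigenbasis of $\bH(\hat{\bmu})$ and a Schur-complement estimate, with $\beta=\bv^{*}\bX^{\mathrm{opt}}\bv/\|\bv\|_2^4$, exactly the paper's $\alpha/\|\bv\|_2^2$. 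This part is sound.

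The one place your bookkeeping falls short of the stated bound is the final summation. Bounding $\|\bC\|_F^2\le\|\bC\|_{\mathrm{op}}\Tr(\bC)\le\Tr(\bX^{\mathrm{opt}})\Tr(\bC)$ and $2\|\mathbf{b}\|_2^2\le 2\beta\|\bv\|_2^2\Tr(\bC)\le 2\Tr(\bX^{\mathrm{opt}})\Tr(\bC)$ separately yields $3\Tr(\bX^{\mathrm{opt}})\Tr(\bC)$, hence $\zeta\le\sqrt{6\rho f_{\mathrm{WLAV}}(\bmeta)/(N\lambda)}$ rather than the claimed $2\sqrt{\rho f_{\mathrm{WLAV}}(\bmeta)/(N\lambda)}$. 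To recover the constant $2$, use the exact identity $\beta\|\bv\|_2^2=\Tr(\bX^{\mathrm{opt}})-\Tr(\bC)$ in the Schur bound, so that $2\|\mathbf{b}\|_2^2\le 2\Tr(\bX^{\mathrm{opt}})\Tr(\bC)-2\Tr^2(\bC)$, and pair it with $\|\bC\|_F^2\le\Tr^2(\bC)$; the $-2\Tr^2(\bC)$ term then absorbs $\|\bC\|_F^2$ and the sum is at most $2\Tr(\bX^{\mathrm{opt}})\Tr(\bC)$, which is exactly the cancellation the paper exploits in \eqref{err:trace}--\eqref{err:trace2}. With that one-line repair, your proof matches the theorem as stated; your treatment of the degenerate case $\bv\in\mathrm{null}(\bX^{\mathrm{opt}})$ to secure $\beta>0$ is a detail the paper glosses over and is handled correctly.
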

\begin{proof}
The proof  is provided in  Appendix~\ref{appendix:rmse}.\end{proof}

Note that the numerator of $\zeta$ quantifies the distance between the optimal solution of the penalized convex program and the true PSSE solution.
The denominator of $\zeta$ is expected to be around $N$ since
$\Tr({\bX}^{\mathrm{opt}})\simeq N$ in the noiseless scenario.
Hence, the quantity $\zeta$ can be regarded as a root-mean-square estimation error.
Theorem~\ref{thm:rmse} establishes an upper bound for the estimation error as a function of the noise power
$f_{\mathrm{WLAV}}(\boldsymbol{\eta})$. In particular, the error is zero if $\boldsymbol{\eta}=0$.
This theorem provides an upper bound on the estimation error without using any statistical information of the random vector $\boldsymbol{\eta}$.
In what follows, the upper bound will be further studied for Gaussian random variables.
To this end, define $\kappa$ as $\frac{M}{N}$. If $M$ were the number of lines in the network, $\kappa$ was between 1.5 and 2 for most real-world power systems \cite{Chow13}.

\begin{corollary}\label{coro:probbound}
Suppose that the noise $\boldsymbol{\eta}$ is a zero-mean Gaussian vector with the covariance matrix $\bSigma = \diag(\sigma_1^2,...,\sigma_M^2)$.
Under the assumptions of Theorem~\ref{thm:rmse}, the tail probability of the estimation error $\zeta$ is upper bounded as
\begin{align}
\mathbb{P}(\zeta>t) \leq \myexp^{-\gamma M}
\end{align}
for every  $t>0$, where $\gamma = \frac{t^4\lambda^2}{32\kappa^2\rho^2}-\ln2$.
\end{corollary}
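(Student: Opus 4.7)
The plan is to combine the deterministic estimation bound of Theorem~\ref{thm:rmse} with a standard Chernoff argument for sums of folded Gaussians. Since $\bSigma$ is diagonal, the rescaled entries $\xi_j := \eta_j/\sigma_j$ are independent standard normal variables, so $f_{\mathrm{WLAV}}(\boldsymbol{\eta}) = \sum_{j=1}^M |\xi_j|$ is a sum of $M$ iid half-normal random variables.

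First I would translate the target probability into a tail statement on $f_{\mathrm{WLAV}}$. By Theorem~\ref{thm:rmse}, the event $\{\zeta>t\}$ is contained in the event
\begin{equation*}
\left\{ f_{\mathrm{WLAV}}(\boldsymbol{\eta}) \,>\, \frac{t^2 N \lambda}{4\rho} \right\},
\end{equation*}
so it suffices to upper bound the probability of the latter. Denote $a := t^2 N \lambda / (4\rho)$ and $S := \sum_{j=1}^M |\xi_j|$.

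Second I would compute (or, rather, bound) the moment generating function of $|\xi_j|$. For any $s\geq 0$,
\begin{equation*}
\mathbb{E}\bigl[\myexp^{s|\xi_j|}\bigr] \,=\, 2\myexp^{s^2/2}\,\Phi(s) \,\leq\, 2\,\myexp^{s^2/2},
\end{equation*}
where $\Phi$ is the standard normal CDF. By independence, $\mathbb{E}[\myexp^{sS}]\le 2^M\myexp^{Ms^2/2}$. Markov's inequality then gives
\begin{equation*}
\mathbb{P}(S>a) \,\leq\, 2^M\myexp^{-sa+Ms^2/2} \quad \text{for every } s\geq 0.
\end{equation*}
Optimizing the quadratic in $s$ (choose $s=a/M$) yields $\mathbb{P}(S>a)\leq \myexp^{M\ln 2 - a^2/(2M)}$.

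Third I would plug in $a = t^2 N\lambda /(4\rho)$ and use $N = M/\kappa$ to rewrite the exponent as
\begin{equation*}
\frac{a^2}{2M} \,=\, \frac{t^4 N^2\lambda^2}{32 M\rho^2} \,=\, M\cdot\frac{t^4\lambda^2}{32\kappa^2\rho^2},
\end{equation*}
which produces the advertised exponent $-\gamma M$ with $\gamma = t^4\lambda^2/(32\kappa^2\rho^2) - \ln 2$. No real obstacle is anticipated beyond bookkeeping; the only subtle point is remembering the factor of $2$ in the MGF of $|\xi_j|$ (coming from the two half-lines of integration), which is precisely what generates the $-\ln 2$ correction inside $\gamma$ and is the reason the bound becomes informative only when $t^4\lambda^2/(32\kappa^2\rho^2) > \ln 2$.
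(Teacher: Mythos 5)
Your proposal is correct and follows essentially the same route as the paper's own proof: rescale to standard normals so that $f_{\mathrm{WLAV}}(\boldsymbol{\eta})=\|\tilde{\boldsymbol{\eta}}\|_1$, apply a Chernoff/MGF bound to the sum of folded Gaussians (the paper writes the MGF via $\mathrm{erfc}$ and bounds it by $2$, which is exactly your $2\myexp^{s^2/2}\Phi(s)\leq 2\myexp^{s^2/2}$), optimize the exponent at $s=a/M$, and substitute $N=M/\kappa$. No differences worth noting.
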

\begin{proof}
The proof is given in  Appendix \ref{appendix:probbound}.
\end{proof}

%

Recall that the measurements used for solving the PSSE problem include one active power flow per each line of the subgraph $\cG^{\prime}$. The graph $\cG^{\prime}$ could be as small as a spanning tree of $\mathcal G$ or as large as the entire graph $\mathcal G$. Although the results developed in this paper work in all of these cases, the number of measurements could significantly vary for different choices of $\cG^{\prime}$. A question arises as to how the number of measurements affects the estimation error. To address this problem, notice that if it is known that some measurements are corrupted with high values of noise, it would be preferable to discard those bad measurements.  To avoid this scenario,  assume that there are two sets of measurements with similar noise levels. It is aimed to show that the set with a higher cardinality would lead to a better estimation error.

\begin{definition} \label{def:dd1} Define $\omega (\cG^{\prime})$ as the minimum of $2\sqrt{\frac{\rho}{N\lambda}}$ over all  dual SDP certificates $\hat{\boldsymbol{\mu}}\in\mathcal{D}(\mathbf{v})$, where $\rho= \max_{j\in\cM} |\sigma_j\hat{\mu}_j|$ and $\lambda$ denotes the second smallest eigenvalue of $\mathbf{H}(\hat{\boldsymbol{\mu}})$.
\end{definition}

In light of Theorem~\ref{thm:rmse},
the  estimation error $\zeta$ satisfies the inequality
\begin{equation}
\zeta\leq \omega (\cG^{\prime}) \sqrt{f_{\mathrm{WLAV}}(\boldsymbol{\eta})}
\end{equation}
if an optimal coefficient $\rho$ is used in the penalized convex problem. The term $ \sqrt{f_{\mathrm{WLAV}}(\boldsymbol{\eta})}$ is related to the noise power. If this term is kept constant, then the estimation error is a function of $\omega (\cG^{\prime})$. Hence, it is desirable to analyze $\omega (\cG^{\prime})$.

\begin{theorem}\label{thm:measurment} Consider two choices of the graph $\cG^{\prime}$, denoted as $\mathcal G^{\prime}_1$ and $\mathcal G^{\prime}_2$, such that $\mathcal G^{\prime}_1$ is a subgraph of $\mathcal G^{\prime}_2$. Then, the relation
\begin{equation}
\omega (\mathcal G^{\prime}_2)\leq \omega (\mathcal G^{\prime}_1)
\end{equation}
holds.
\end{theorem}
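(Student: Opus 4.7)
The plan is to construct, for every dual SDP certificate associated with $\mathcal G^{\prime}_1$, a corresponding dual SDP certificate for $\mathcal G^{\prime}_2$ that produces an identical value of $2\sqrt{\rho/(N\lambda)}$. Since $\omega(\mathcal G^{\prime}_2)$ is a minimum over its certificate set, this will immediately imply $\omega(\mathcal G^{\prime}_2)\le \omega(\mathcal G^{\prime}_1)$.

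More concretely, recall that the measurement set consists of the bus voltage magnitudes (common to both $\mathcal G^{\prime}_1$ and $\mathcal G^{\prime}_2$) and one active line-flow measurement per edge of the chosen subgraph. Because $\mathcal G^{\prime}_1\subseteq \mathcal G^{\prime}_2$, the measurement set $\mathcal M_1$ induced by $\mathcal G^{\prime}_1$ is a subset of the measurement set $\mathcal M_2$ induced by $\mathcal G^{\prime}_2$. Given any $\hat{\boldsymbol{\mu}}^{(1)}\in\mathcal D_{\mathcal G^{\prime}_1}(\mathbf{v})$, I would define $\hat{\boldsymbol{\mu}}^{(2)}\in\mathbb R^{|\mathcal M_2|}$ by
\[
\hat{\mu}^{(2)}_j \;=\; \begin{cases} \hat{\mu}^{(1)}_j, & j\in \mathcal M_1,\\ 0, & j\in \mathcal M_2\setminus\mathcal M_1.\end{cases}
\]
The key observation is that, by definition \eqref{Hdef}, the zero-padded multipliers contribute nothing to the dual matrix, so $\mathbf{H}(\hat{\boldsymbol{\mu}}^{(2)})=\mathbf{H}(\hat{\boldsymbol{\mu}}^{(1)})$. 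Consequently the three properties in \eqref{dual_cer} are inherited verbatim: positive semidefiniteness, $\mathbf{H}(\hat{\boldsymbol{\mu}}^{(2)})\mathbf{v}=\mathbf{0}$, and $\mathrm{rank}(\mathbf{H}(\hat{\boldsymbol{\mu}}^{(2)}))=N-1$. Hence $\hat{\boldsymbol{\mu}}^{(2)}\in\mathcal D_{\mathcal G^{\prime}_2}(\mathbf{v})$.

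Next I would compare the two scalar quantities that enter $\omega(\cdot)$. Since the added coordinates are zero, $\max_{j\in\mathcal M_2}|\sigma_j \hat{\mu}^{(2)}_j|=\max_{j\in\mathcal M_1}|\sigma_j \hat{\mu}^{(1)}_j|$, so the associated $\rho$ is unchanged. Because the dual matrices are literally identical, their second smallest eigenvalues $\lambda$ also coincide. Therefore the value $2\sqrt{\rho/(N\lambda)}$ at $\hat{\boldsymbol{\mu}}^{(2)}$ equals the value at $\hat{\boldsymbol{\mu}}^{(1)}$. Taking minima yields
\[
\omega(\mathcal G^{\prime}_2)\;\le\; 2\sqrt{\tfrac{\rho}{N\lambda}}\Big|_{\hat{\boldsymbol{\mu}}^{(2)}}\;=\;2\sqrt{\tfrac{\rho}{N\lambda}}\Big|_{\hat{\boldsymbol{\mu}}^{(1)}},
\]
and optimizing the right-hand side over $\hat{\boldsymbol{\mu}}^{(1)}\in\mathcal D_{\mathcal G^{\prime}_1}(\mathbf{v})$ produces exactly $\omega(\mathcal G^{\prime}_1)$, as desired.

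There is essentially no deep obstacle here: the argument is a monotonicity/embedding statement that relies entirely on the fact that enlarging the measurement set strictly expands (via a canonical zero-extension) the feasible set of dual multipliers without altering the dual matrix $\mathbf H$. The only subtlety worth a brief remark in the write-up is that Definition~\ref{dual_cer_def} must be interpreted relative to the measurement set in force, so one should explicitly index the certificate sets by $\mathcal G^{\prime}_1$ and $\mathcal G^{\prime}_2$ to avoid ambiguity; once this is clarified, the inclusion $\mathcal D_{\mathcal G^{\prime}_1}(\mathbf v)\hookrightarrow \mathcal D_{\mathcal G^{\prime}_2}(\mathbf v)$ via zero-padding carries the proof.
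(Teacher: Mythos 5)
Your proof is correct and is essentially the paper's own argument: the paper simply asserts that the set of dual certificates for $\mathcal G^{\prime}_1$ is contained in that for $\mathcal G^{\prime}_2$, and your zero-padding construction is precisely the canonical embedding that makes this containment (and the invariance of $\rho$ and $\lambda$ under it) explicit. No changes needed.
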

\begin{proof}
The proof follows from the fact that the feasible set of the dual certificate $\hat{\boldsymbol{\mu}}$ for the case  $\mathcal G^{\prime}=\mathcal G^{\prime}_1$ is contained in the feasible set  of $\hat{\boldsymbol{\mu}}$ for $\mathcal G^{\prime}=\mathcal G^{\prime}_2$.
\end{proof}

The penalized  convex program \eqref{PSSE-SDPP}  may have a non-rank-1 solution in the noisy case.
Whenever the optimal solution  $\bX^{\text{opt}}$ is not rank 1, an estimated voltage vector $\hat{\bv}$ can be obtained using a rank-1 approximation method, such as the following  algorithm borrowed from \cite{madani2014promises}:
\begin{itemize}
\item [i)] Set the voltage magnitudes via the equations
\begin{align}\label{anglerecovery}
|\hat{v}_k| = \sqrt{\bX_{k,k}^{\text{opt}}}, \quad k=1,2,\ldots,N.
\end{align}
\item [ii)] Set the voltage angles via the convex program
\begin{subequations}\label{magrecovery}
\begin{align}
\measuredangle \hat{\bv} = &\argmin_{\measuredangle \bv \in [-\pi, \pi]^N} \sum_{(s,t)\in \cL}
|\measuredangle \bX_{s,t}^{\text{opt}}- \measuredangle v_{s} + \measuredangle v_{t}| \\
&\st \quad \measuredangle v_{\text{ref}} = 0.
\end{align}
\end{subequations}
\end{itemize}
Note that  $\hat{\bv}$ is the true solution of the PSSE problem if   $\bX^{\text{opt}}$ has rank 1.

\subsection{Reduction of Computational Complexity}

Due to the presence of the positive semidefinite constraint $\bX \succeq \mathbf{0}$,
 solving the  conic problems \eqref{PF-SDPP} and \eqref{PSSE-SDPP} is computationally expensive or
even prohibitive for large-scale power systems.
In this subsection, we deploy  a graph-theoretic approach  to replace
the complicating constraint $\bX \succeq \mathbf{0}$
with a set of small-sized SDP or SOCP constraints.

\begin{definition}
The sparsity graph of the problem \eqref{PF-SDPP} or \eqref{PSSE-SDPP} is defined as the
union of the sparsity graphs of the coefficient matrices $\mathbf{M}_j$ for $j = 0,1,\ldots,M$.
In other words, the sparsity graph of \eqref{PF-SDPP}  or \eqref{PSSE-SDPP} denoted as $\tilde{\mathcal G}=(\mathcal N,\tilde{\mathcal L})$
is a simple undirected graph with $N$ vertices,
which has an edge between every two  distinct vertices $s$ and $t$ if and only if
the $(s,t)$ entry of $\mathbf{M}_{j;st}$ is nonzero for some $j \in \{0,1,\ldots,M\}$.
\end{definition}

\begin{definition}[Tree decomposition]
A tree decomposition of $\tilde{\mathcal G}$ is a 2-tuple $(\cB, \cT)$,
where $\cB = \{\cB_1,\ldots,\cB_Q\}$ is a collection of subsets of $\cN$ and $\cT$ is a tree whose nodes (called \emph{bags}) are the subsets $\cB_r$ and satisfy the following properties:
\begin{itemize}
  \item Vertex coverage: Each vertex of $\tilde{\mathcal G}$ is a member of at least one node of $\cT$, i.e., $\cN = \cB_1 \cup \cdots \cup \cB_Q$.
  \item Edge coverage: For every edge $(s, t)$ in $\tilde{\mathcal G}$, there is a bag $\cB_r$ that contains both ends $s$ and $t$.
  \item Running intersection: For every two bags $\cB_i$ and $\cB_j$ in $\cT$, every node on the path connecting $\cB_i$ and $\cB_j$ contains $\cB_i \cap \cB_j$.
      In other words, all nodes of $\cT$ that contain a common vertex of $\tilde{\mathcal G}$ should form a subtree.
\end{itemize}
\end{definition}

\begin{theorem}\label{chor_them_1}
The optimal objective values of the SDP problems \eqref{PF-SDPP} and \eqref{PSSE-SDPP}  do not change if
their constraint $\bX \succeq \mathbf{0}$ is replaced by the set of constraints
\begin{align}\label{decomSDP}
  \bX[\cB_r,\cB_r] \succeq \mathbf{0}, \quad \forall r\in\{ 1,2,\ldots,Q\}.
\end{align}
\end{theorem}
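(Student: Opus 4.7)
The plan is to prove the equivalence by a two-sided containment argument on feasible sets, combined with a chordal positive semidefinite completion result. One direction is immediate: any $\bX$ feasible for \eqref{PF-SDPP} or \eqref{PSSE-SDPP} is also feasible for the decomposed version, since $\bX\succeq\mathbf{0}$ implies $\bX[\cB_r,\cB_r]\succeq\mathbf{0}$ for every principal submatrix. Consequently, the decomposed formulation has a feasible set at least as large, and its optimal objective value is no larger than that of the original SDP. The nontrivial direction is to show that any feasible point of the decomposed problem can be converted, without altering the objective value or violating any linear equality constraint, into a feasible point of the original fully-coupled SDP.

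First, I would observe that the objective $\Tr(\bM_0\bX)$ and each equality constraint $\Tr(\bM_j\bX)=z_j$ (or its noisy counterpart in \eqref{PSSE-SDPP}) involve only those entries $X_{s,t}$ for which $M_{j;st}\neq 0$ for some $j\in\{0,1,\ldots,M\}$. By the definition of the sparsity graph $\tilde{\mathcal{G}}=(\mathcal{N},\tilde{\mathcal{L}})$, the set of such $(s,t)$ pairs is exactly $\tilde{\mathcal{L}}$ together with the diagonal. The edge-coverage property of the tree decomposition $(\cB,\cT)$ ensures that for every $(s,t)\in\tilde{\mathcal{L}}$ there exists a bag $\cB_r$ containing both $s$ and $t$; the vertex-coverage property handles the diagonal. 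Therefore, the objective and all equality constraints are functions only of the entries of $\bX$ that appear in at least one bag submatrix $\bX[\cB_r,\cB_r]$.

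Next, given any feasible $\bX$ for the decomposed problem, I would invoke the classical chordal PSD completion theorem of Grone, Johnson, S\'{a}, and Wolkowicz: if a partially-specified Hermitian matrix has all its specified entries forming the edge set of a chordal graph, and every principal submatrix corresponding to a maximal clique is positive semidefinite, then there exists a PSD completion of the full matrix. The running intersection property of the tree decomposition guarantees that the graph $\hat{\mathcal{G}}$ obtained by making each $\cB_r$ a clique is chordal, with maximal cliques contained in the bags. Applying the completion theorem to the restriction of $\bX$ to the entries indexed by the bags yields an Hermitian matrix $\tilde{\bX}\succeq\mathbf{0}$ that agrees with $\bX$ on every bag submatrix. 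Since the objective and all constraint coefficients vanish outside the bag-covered entries, $\tilde{\bX}$ satisfies $\Tr(\bM_j\tilde{\bX})=\Tr(\bM_j\bX)$ for $j=0,1,\ldots,M$, so $\tilde{\bX}$ is feasible for the original SDP with the same objective value.

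The main obstacle I anticipate is citing and applying the chordal PSD completion result in a self-contained way, since it is the nontrivial ingredient. Care must also be taken to handle the slack variable $\bnu$ in \eqref{PSSE-SDPP}: because $\bnu$ is independent of $\bX$, the completion argument applies unchanged to the pair $(\tilde{\bX},\bnu)$, and the objective term $\rho f(\bnu)$ is unaffected. Combining both directions yields equality of optimal values, completing the proof.
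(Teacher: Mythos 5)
Your proposal is correct and follows the same route as the paper, which proves this theorem by directly invoking the chordal matrix completion theorem of Grone, Johnson, S\'{a}, and Wolkowicz together with the chordal extension induced by the tree decomposition. You have simply spelled out the details (the easy containment, the edge/vertex-coverage argument showing the data only touch bag-covered entries, and the PSD completion step) that the paper leaves implicit.
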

\begin{proof}
This theorem is a direct consequence of the matrix completion theorem and chordal extension \cite{Grone1984}.
\end{proof}

As a by-product of Theorem~\ref{chor_them_1}, all off-diagonal entries of $\bX$ that do not appear in the submatrices $\bX[\cB_r,\cB_r]$ are redundant and could be eliminated from the SDP relaxations. This significantly reduces the computational complexity for sparse power systems. As an example, consider the case where  the sparsity graph  $\tilde{\mathcal G}$ is acyclic. Then,  $\tilde{\mathcal G}$ has a tree decomposition
such that each bag contains only two connected vertices of $\tilde{\mathcal G}$.
Hence, the decomposed constraints \eqref{decomSDP}
boil down to positive semidefinite constraints on a set of $2 \times 2$ submatrices of $\bX$.
This special case is formalized below.

\begin{corollary}
Suppose that the sparsity graph $\tilde{\mathcal G}$ is  a spanning tree of $\mathcal{G}$. Then,  the optimal objective value of the penalized SDP problem \eqref{PSSE-SDPP} is equal to the optimal objective value of the penalized SOCP  problem \eqref{PSSE-SOCP}.
\end{corollary}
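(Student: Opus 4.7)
The plan is to build a valid tree decomposition of $\tilde{\mathcal{G}}$ in which every bag has exactly two vertices corresponding to an edge of the tree, apply Theorem~\ref{chor_them_1} to the penalized SDP problem \eqref{PSSE-SDPP}, and observe that the resulting decomposed constraints coincide verbatim with the $2\times 2$ SOCP constraints \eqref{PSSE-SOCP:cone}. The two programs then share the same objective, the same linear equality constraints, and an equivalent feasible set, so their optimal values must agree.

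Concretely, first I would construct the tree decomposition $(\cB,\cT)$ as follows. Pick an arbitrary root of $\tilde{\mathcal G}$ and, for each edge $(s,t)\in\tilde{\mathcal L}$, introduce the bag $\cB_{(s,t)}=\{s,t\}$. Connect two bags in $\cT$ whenever their underlying edges share a vertex in $\tilde{\mathcal G}$; this forms a tree because $\tilde{\mathcal G}$ is a tree. Vertex coverage holds because every vertex of a spanning tree is incident to at least one edge; edge coverage holds by construction; and the running intersection property follows from the fact that for each fixed vertex $v$, the bags containing $v$ are precisely those associated with the edges incident to $v$, which form a connected subgraph (a star around $v$) in $\cT$.

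Next I would apply Theorem~\ref{chor_them_1} to \eqref{PSSE-SDPP} with this decomposition, replacing $\bX\succeq\mathbf{0}$ by the collection
\begin{equation}
\bX[\cB_{(s,t)},\cB_{(s,t)}]=
\begin{bmatrix}
X_{s,s} & X_{s,t}\\
X_{t,s} & X_{t,t}
\end{bmatrix}\succeq\mathbf{0},\qquad \forall (s,t)\in\tilde{\mathcal L},
\end{equation}
without changing the optimal value. Finally, I would check that the index set $\overline{\mathcal L}$ appearing in \eqref{PSSE-SOCP:cone} coincides with the edge set $\tilde{\mathcal L}$ of the sparsity graph, since both are defined as the pairs $(s,t)$ for which some $\bM_j$ has a nonzero $(s,t)$ entry. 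Consequently, the decomposed SDP and the SOCP \eqref{PSSE-SOCP} impose identical cone constraints while carrying identical objectives and linear equalities, yielding equality of optimal values.

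The argument is largely a bookkeeping exercise, so the only genuine subtlety lies in establishing the running intersection property for the proposed decomposition; once that is verified, everything else reduces to an invocation of Theorem~\ref{chor_them_1} together with the observation that $\overline{\mathcal L}=\tilde{\mathcal L}$ when $\tilde{\mathcal G}$ is a spanning tree of $\mathcal G$.
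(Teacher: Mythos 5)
Your overall route is exactly the paper's: the corollary is presented as an immediate specialization of Theorem~\ref{chor_them_1} to an acyclic sparsity graph, using a tree decomposition whose bags are the two-vertex edge sets of $\tilde{\mathcal G}$, after which the decomposed constraints \eqref{decomSDP} are literally the $2\times 2$ constraints \eqref{PSSE-SOCP:cone}. Your identification $\overline{\mathcal L}=\tilde{\mathcal L}$ is also correct, since both sets are defined by the same nonzero-entry condition and, because $\tilde{\mathcal G}$ is a spanning tree, every vertex is covered by some edge-bag. So there is no difference in strategy from the paper, which gives only this sketch and no formal proof.

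There is, however, one concrete error in your construction of the bag tree $\cT$. You connect two bags whenever their underlying edges share a vertex; that graph is the \emph{line graph} of $\tilde{\mathcal G}$, and the line graph of a tree is not a tree as soon as some vertex has degree at least three: for a star with center $c$ and three leaves, the three edge-bags are pairwise adjacent, producing a triangle. Since spanning trees of real networks routinely contain such vertices, the claim ``this forms a tree because $\tilde{\mathcal G}$ is a tree'' fails, and without $\cT$ being a tree you do not have a tree decomposition to feed into Theorem~\ref{chor_them_1}. The repair is standard: root $\tilde{\mathcal G}$ and attach the bag of each non-root edge to the bag of its parent edge (handling the edges incident to the root by chaining them through one designated child), or equivalently take any spanning tree of the line graph in which, for every vertex $v$, the bags containing $v$ induce a connected subtree. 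With that modification the running intersection property holds by essentially the star argument you give, and the rest of your bookkeeping — identical objective, identical equality constraints, identical cone constraints — goes through to give equality of the optimal values.
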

It can be readily shown that the number of scalar optimization variables associated with the SOCP relaxation  \eqref{PSSE-SOCP} (after eliminating redundant variables) is $\mathcal{O}(N)$
as opposed to $\mathcal{O}(N^2)$ for the SDP relaxation \eqref{PSSE-SDPP}.

\section{Numerical Tests}\label{sec:test}

In this section, numerical results are presented to verify the performance of the proposed convexification techniques for
the PSSE problem. The tests are conducted on several benchmark power systems \cite{Josz16}, where the admittance matrices and the
underlying system states are obtained from \texttt{MATPOWER} \cite{matpower}.
Unless otherwise stated, the available measurements are assumed to be: (i) voltage magnitudes at all buses, and
(ii) one active power flow per line of a spanning tree of the network.
 The tree is obtained by the function \texttt{graphminspantree} in the Matlab bioinformatics toolbox \cite{matlabMST}.


We first compare the proposed SOCP relaxation \eqref{PSSE-SOCP} with the conventional WLS estimator (by using the Matpower function \texttt{run\_se} with flat start) for estimating the true complex voltage vector $\bv$. The performance metric is the root-mean-square error (RMSE) of the estimated voltage $\hat{\bv}$, which is defined as $\xi(\hat{\bv}):=\|\hat{\bv}-\bv\|_2/\sqrt{N}$.
The simulation results tested on the IEEE 57-bus and 118-bus systems are shown in Figures \ref{WLS_Newton}(a) and \ref{WLS_Newton}(b), respectively.
In each case, the measurements are under 100 randomly generated realizations of noise, which correspond to the voltage magnitudes for all buses and the active power flows at both ends of all lines. The zero-mean Gaussian noises have 0.002 and 0.001 per unit standard deviations for squared voltage magnitudes and line flows, respectively. In addition, $20\%$ of randomly chosen line flow measurements are generated as bad data,
which are contaminated by adding zero-mean Gaussian noises with 0.1 per unit standard deviation.
The coefficient matrix $\mathbf{M}_0$ is chosen as a real symmetric matrix with negative values at entries corresponding to the line flow measurements and zero elsewhere. The penalty weight is set to $\rho=1$ for all test cases.
Clearly, the penalized SOCP method significantly outperforms the conventional Newton-based WLS estimator.

Furthermore, we evaluate the effect of different types of measurements and scaling of load demand on the performance of PSSE. The simulation results are shown in Figure \ref{WLS_Newton_2}. In Figure \ref{WLS_Newton_2}(a), voltage measurements are only given at the reference and load (PQ) buses. 
In addition to the active power flows at both ends of all lines, reactive power flows are available at ``to'' ends of half of the lines.  Despite the fact that our assumption on voltage measurements does not hold in this case,
the proposed approach still has much smaller RMSEs. Similarly, performance gains are observed in Figure \ref{WLS_Newton_2}(b), where all fixed loads are scaled up 10\%.

\begin{figure}[t]
	\centering
	\hspace{-0.7cm}\subfloat[\label{WLS_Newton_57}]{\includegraphics[width = 7.7cm]{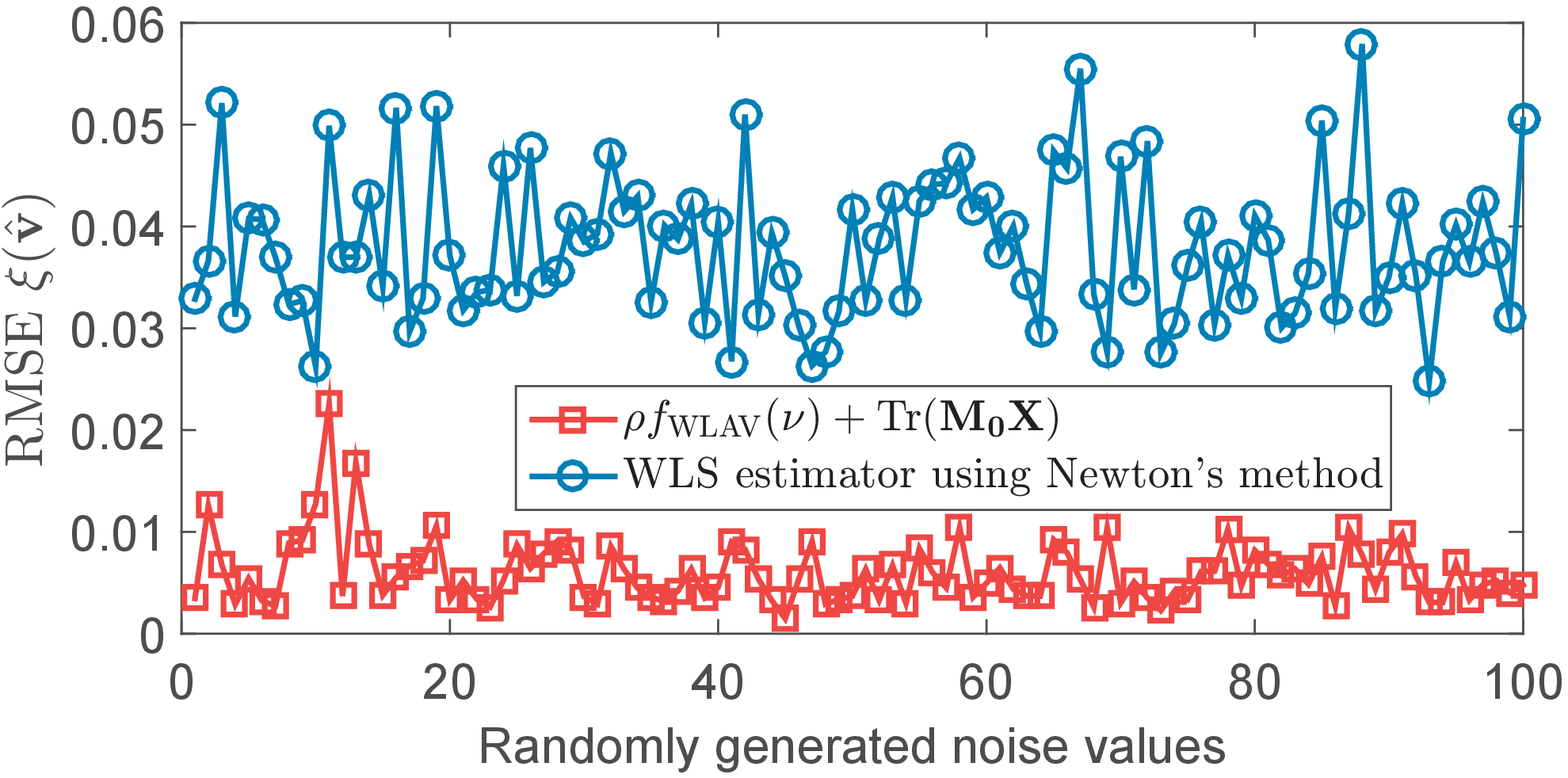}}\\
	\hspace{-0.7cm}\subfloat[\label{WLS_Newton_118}]{\includegraphics[width = 7.8cm]{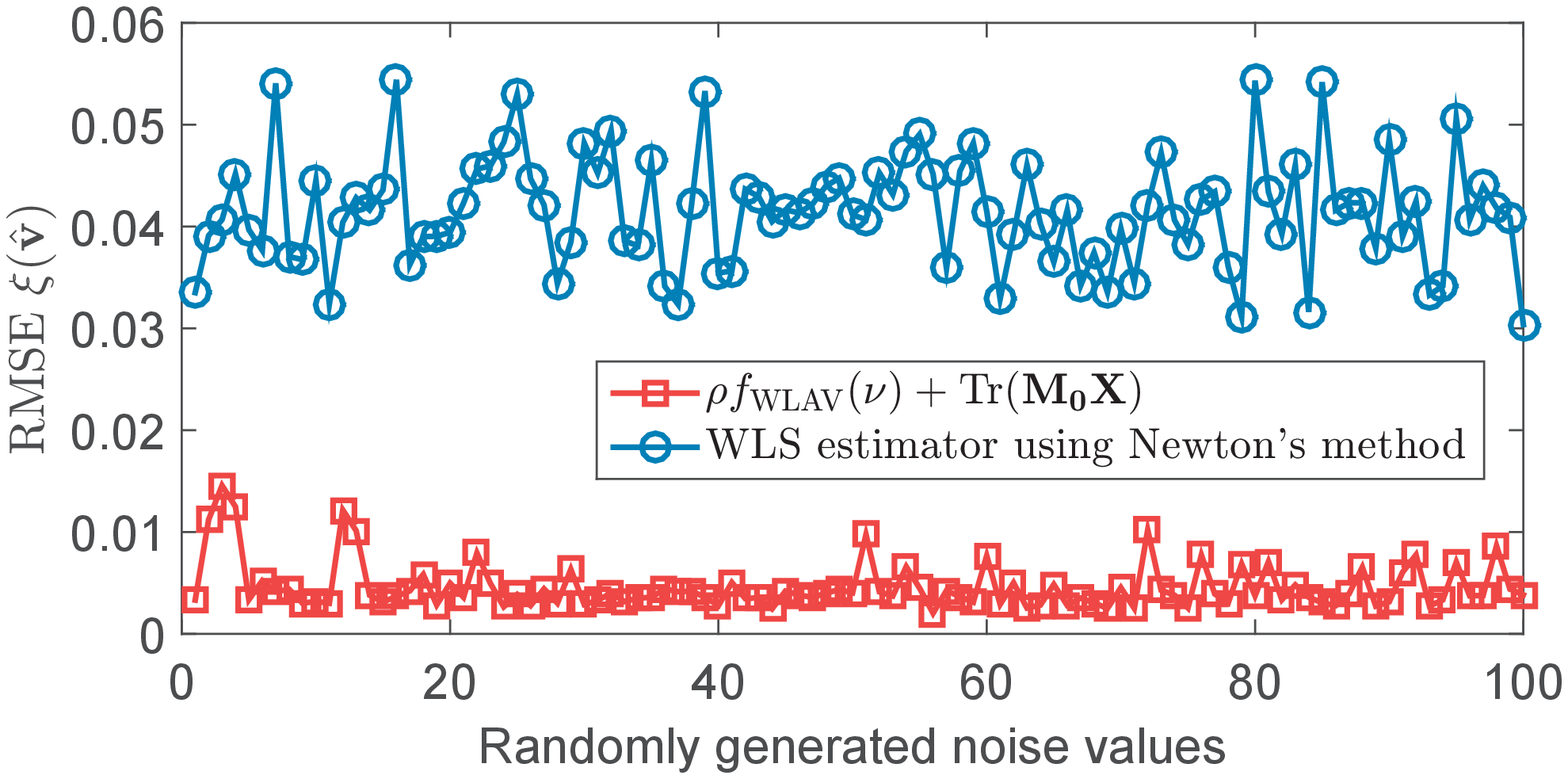}}
	\caption{The RMSEs of the estimated voltages obtained by the penalized SOCP method and the WLS-based Newton method: (a) IEEE 57-bus system, (b) IEEE 118-bus system.}\label{WLS_Newton}
\end{figure}

\begin{figure}[t]
	\centering
	\hspace{-0.7cm}\subfloat[\label{WLS_Newton_partV}]{\includegraphics[width = 7.7cm]{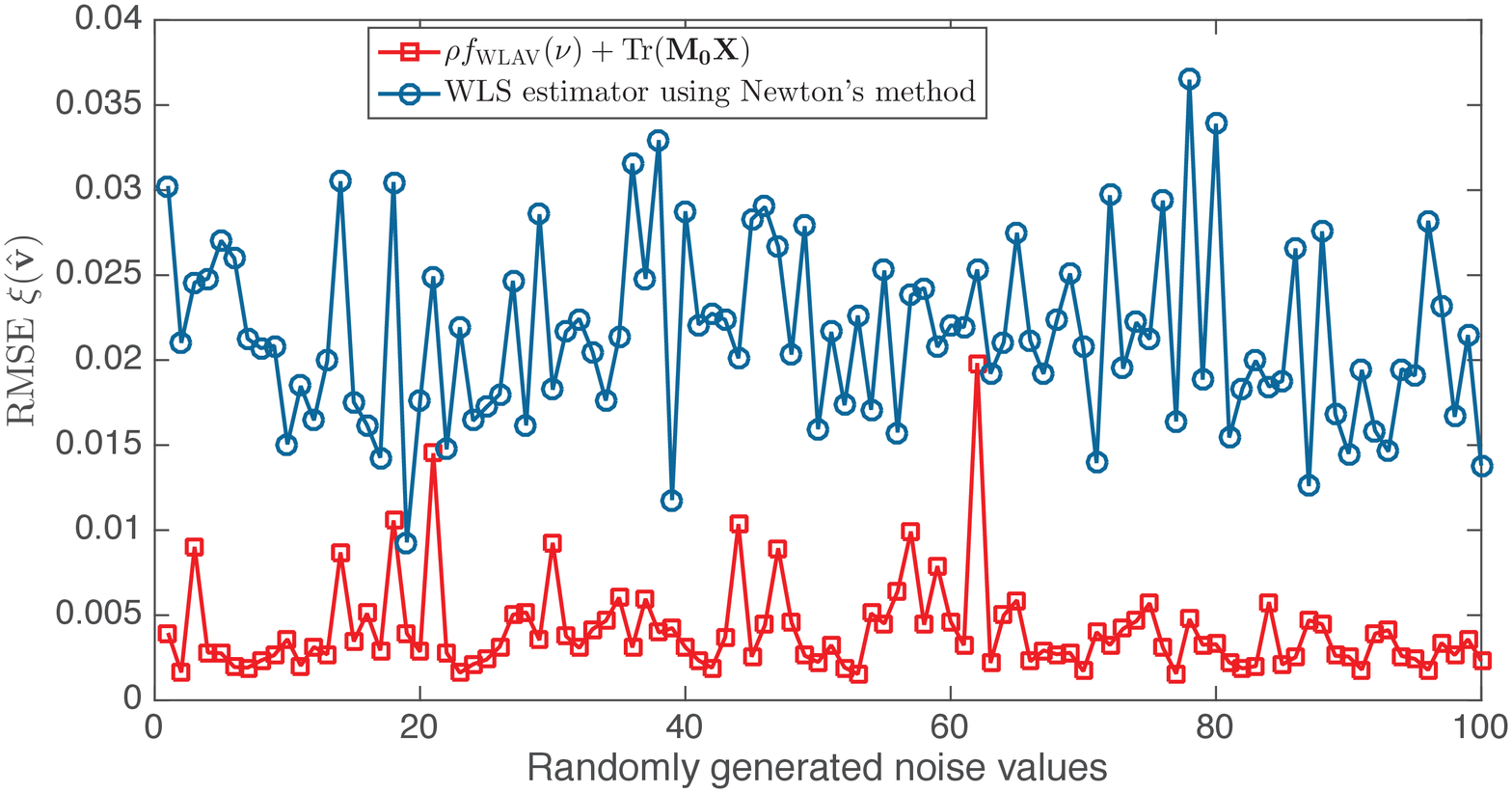}}\\
	\hspace{-0.7cm}\subfloat[\label{WLS_Newton_scaleload}]{\includegraphics[width = 7.7cm]{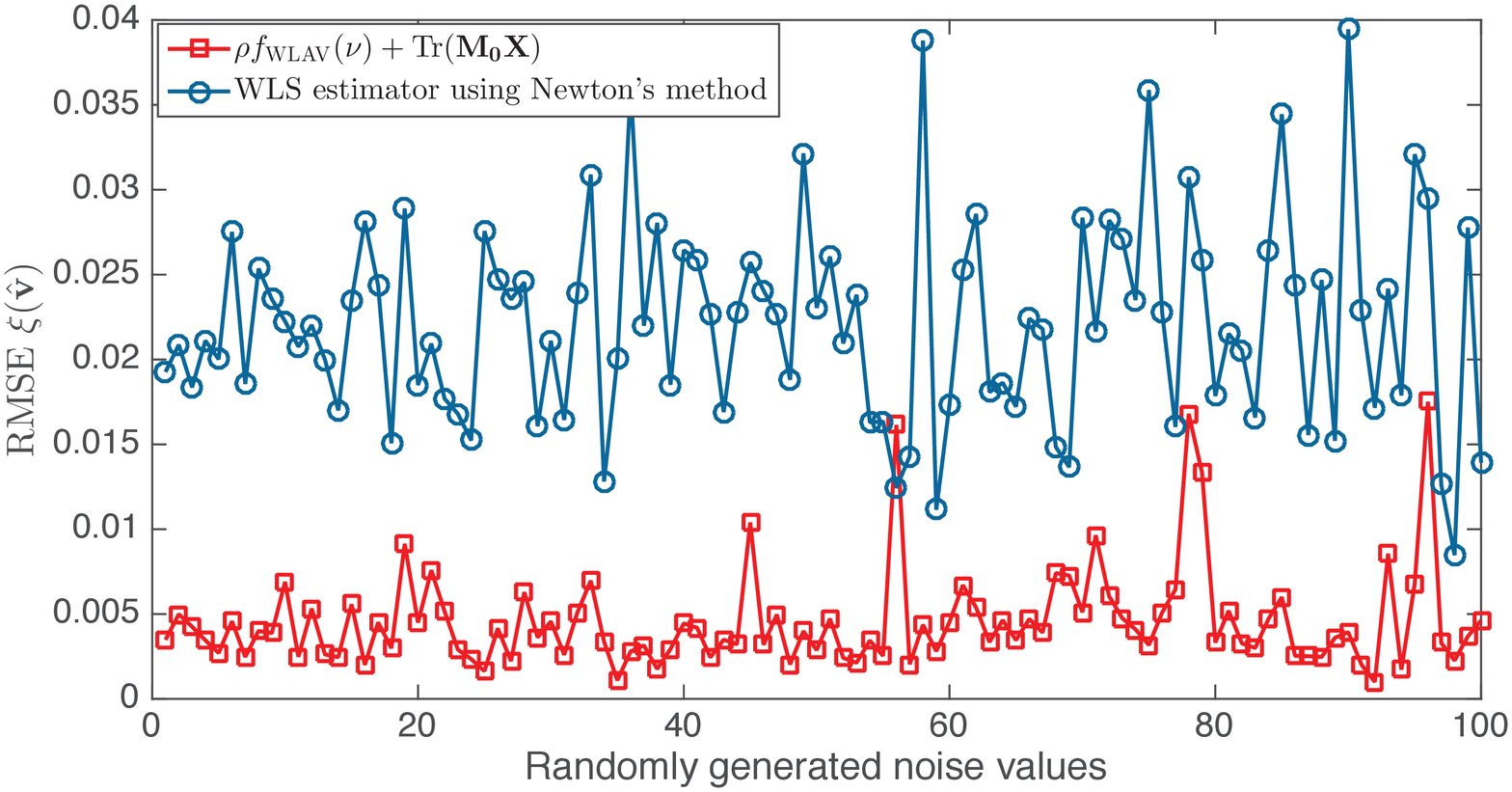}}
	\caption{The RMSEs of the estimated voltages obtained by the penalized SOCP method and the WLS-based  Newton method for the IEEE 57-bus system: (a) voltage magnitude measurements are not available at generator (PV) buses, (b) active and reactive power of all loads are scaled up 10\%.}\label{WLS_Newton_2}
\end{figure}

The numerical results for the penalized SDP relaxation problem \eqref{PSSE-SDPP} performed on several benchmark systems are shown in Tables \ref{tab:perf1} and \ref{tab:perf2}. The following numbers shown in \eqref{rmse:Xopt} are reported for each case:
\begin{itemize}
\item $\zeta$: the RMSE of the obtained optimal SDP solution $\bX^{\text{opt}}$. 
\item $\zeta^{\text{max}}$: the upper bound of $\zeta$.
\item Other relevant quantities $\beta$, $\lambda$, $f_{\text{WLAV}}$ and $\rho^{\text{min}}$.
\end{itemize}
In this test, for each squared voltage magnitude $\{|v_k|^2\}_{k\in \cN}$, the standard deviation of the zero-mean Gaussian noise is chosen $c$ times higher than its noiseless value, where $c>0$ is a pre-selected scalar quantifying the noise level. Likewise, the standard deviations for nodal and branch active/reactive power measurements are $1.5c$ and $2c$ times higher than the corresponding noiseless values, respectively. 
The entries of matrix $\mathbf{M}_0$ are set as  $\mathbf{M}_{0;st} = -\mathbf{B}_{st}$ for all
$(s,t)\in\cL^{\prime}$, and  $\mathbf{M}_{0;ii} = \sum_{j=1}^N|\mathbf{B}_{i,j}|$ for $i=1,2,\ldots,N$.
The penalty weight is set to $\rho^{\text{min}}:= \max_{j\in\cM} |\sigma_j\hat{\mu}_j|$ as given in \eqref{eq:rho}.

\begin{table}[t]
\centering
\caption{Performance of the penalized SDP \eqref{PSSE-SDPP} with the noise level $c=0.01$.}\label{tab:perf1}
\begin{tabular}{|p{7.4mm}|p{6.5mm}|p{6.5mm}|p{6.5mm}|p{6.5mm}|p{6.5mm}|p{7.5mm}|p{6.5mm}|}
\hline
\text{Cases}     &$\xi(\hat{\bv})$     &$\zeta$	   &$\zeta^{\text{max}}$	 &$\beta$	&$\lambda$	&$f_{\text{WLAV}}$    &$\rho^{\text{min}}$ \\  \hline
\text{9-bus}	  &  0.0111 &     0.0145 &     0.1535 &     0.9972 &     1.3417 &    14.768 &     0.0048         \\
\text{14-bus}	  & 0.0057  &    0.0078  &    0.2859  &    1.0005  &    0.3812  &   20.509  &    0.0053       \\
\text{30-bus}	  & 0.0060  &    0.0084  &    0.3728  &    0.9997  &    0.1094  &   51.479  &    0.0022        \\
\text{39-bus}	  & 0.0077  &    0.0083  &    0.8397  &    1.0009  &    0.7438  &   62.558  &    0.0817          \\
\text{57-bus}	  & 0.0092  &    0.0102  &    0.8364  &    1.0013  &    0.0912  &   88.434  &    0.0103           \\
\text{118-bus}	  & 0.0057  &    0.0079  &    1.2585  &    0.9992  &    0.0878  &  179.509  &    0.0228           \\
\hline
\end{tabular}
\end{table}

\begin{table}[t]
\centering
\caption{Performance of the penalized SDP \eqref{PSSE-SDPP} with the noise level $c=0.1$.}\label{tab:perf2}
\begin{tabular}{|p{7.4mm}|p{6.5mm}|p{6.5mm}|p{6.5mm}|p{6.5mm}|p{6.5mm}|p{7.5mm}|p{6.5mm}|}
\hline
\text{Cases}     &$\xi(\hat{\bv})$     &$\zeta$	   &$\zeta^{\text{max}}$	 &$\beta$	&$\lambda$	&$f_{\text{WLAV}}$    &$\rho^{\text{min}}$ \\  \hline
\text{9-bus}	  &  0.0357     &    0.0462   &     0.4237  &     0.9779 &     1.3417 &      11.250  &        0.0482        \\
\text{14-bus}	  &  0.0418     &    0.0537   &     0.8119  &     0.9682 &     0.3812 &      16.536  &       0.0532           \\
\text{30-bus}	  &   0.0297    &     0.0405  &     1.1734  &     0.9882 &     0.1094 &      50.993  &       0.0222           \\
\text{39-bus}	  &   0.0485    &     0.0676  &     2.4315  &     0.9840 &     0.7438 &      52.462  &       0.8173           \\
\text{57-bus}	  &   0.0907    &     0.1028  &     2.6937  &     1.0393 &     0.0912 &      91.724  &       0.1028               \\
\text{118-bus}	  &   0.0559    &     0.0743  &     4.0302  &     0.9871 &    0.0878  &      184.093  &        0.2284         \\
\hline
\end{tabular}
\end{table}

\begin{table}[t]
\centering
\caption{The average RMSEs of the estimated voltage vector $\hat{\bv}$ obtained by
the penalized SDP \eqref{PSSE-SDPP} for six different objective functions with the noise level $c=0.1$.}\label{tab:4obj}
\begin{tabular}{|p{8.2mm}|p{6.5mm} p{6.5mm}|p{6.5mm} p{6.5mm}|p{6.5mm} p{6.5mm}| } 
\cline{1-7}
\multirow{2}{*}{Methods}  & \multicolumn{2}{c|}{$\rho f(\bnu)+\Tr(\bM_0\bX)$} &  \multicolumn{2}{c|}{ $\rho f(\bnu)+\nuclearnorm{\bX}$} &  \multicolumn{2}{c|}{$ \rho f(\bnu)$} \\
\cline{2-7}
& \text{WLAV} &  \text{WLS} & \text{WLAV} &  \text{WLS} & \text{WLAV} &  \text{WLS} \\
\cline{1-7}
\text{9-bus}	   &  0.0648      & 0.1293    &    1.2744      & 1.1483       &    1.1619      &  1.1633      \\
\text{14-bus}      &  0.1307      &  0.1784   &  1.1320     &    1.3871       &    1.4233      &   1.4215          \\
\text{30-bus}      &  0.2055      &  0.2543   &   1.4236      &   1.4306      &    1.4269      &  1.4268            \\
\text{39-bus}      &  0.1324      &  0.1239   &  1.1317      &  1.3135       &    1.2764      &   1.2757            \\
\text{57-bus}      &  0.2343      & 0.2809    &  1.2981     & 1.3004        &    1.3235      &    1.3098          \\
\text{118-bus}     &  0.1136      & 0.1641    &  1.3620      &  1.3272       &    1.3445      &     1.3577         \\
\hline
\end{tabular}
\end{table}

\begin{table}[t]
\centering
 \caption{Simulation times of the penalized conic relaxations with 
 $f_{\text{WLAV}}(\boldsymbol{\nu})$ and the noise level $c=0.1$ (the unit is second).}\label{tab:simuTime}
\begin{tabular}{|l|c|c|}
\hline
\text{Cases}     &\text{Solver time}     &\text{Total time}   \\  \hline
\text{9-bus}	  &  0.89 &     1.58   \\
\text{14-bus}	  & 1.23  &    2.54       \\
\text{30-bus}	  & 1.33  &    3.21         \\
\text{39-bus}	  & 1.56  &    3.28           \\
\text{57-bus}	  & 1.97  &    4.09            \\
\text{118-bus}	  & 2.38  &    5.63             \\
\text{1354-bus}	  & 4.55  &    9.48           \\
\text{2869-bus}	  & 13.17  &    24.44            \\
\text{9241-bus}	  & 58.00  &    109.14             \\
\hline
\end{tabular}
\end{table}

For all test cases, it can be observed that the obtained optimal solutions of the penalized SDP method yield good estimates of the complex voltages featuring small RMSEs $\xi(\hat{\bv})$ and $\zeta$. These two error metrics are roughly on the same order as the corresponding noise levels.
Furthermore, the value of $\zeta^{\text{max}}$ is calculated using the quantities $\rho$ and $\lambda$. As expected, this is a legitimate upper bound on $\zeta$ that corroborates our theoretical results in Theorem \ref{thm:rmse}. The tightness of this upper bound depends on the second smallest eigenvalue of the dual matrix $\bH(\hat{\bmu})$, which is a function of the
true state $\bv$ and the matrix $\bM_0$. The discrepancy between $\zeta$ and $\zeta^{\text{max}}$ is rooted in the fact that $\zeta$ corresponds to our realization of noise, but $\zeta^{\text{max}}$ works for all realizations of the noise independent of its statistical properties.
Moreover, the value of the scaling factor $\beta$ (see \eqref{rmse:Xopt}) is always very close to 1 for all scenarios.
This implies   that the optimal SDP solution $\bX^{\text{opt}}$
is close to the true lifted state $\bv\bv^{*}$ without scaling this rank-one matrix.

To further  show the merit of the  proposed penalized SDP framework,
we compare the performance of the convex problem \eqref{PSSE-SDPP} against two  other estimation techniques.
To this end, consider three convex programs that are obtained from  \eqref{PSSE-SDPP} by changing its objective to:
(i) $\rho f(\bnu)+\Tr(\bM_0\bX)$, (ii) $\rho f(\bnu)+\nuclearnorm{\bX}$ (see \cite{Weng15} and \cite{Kim15}),
 and  (iii)  $\rho f(\bnu)$ (see  \cite{Zhu11,Zhu14,Weng12,Weng13}).
Each of these methods is tested for  both WLAV and  WLS  functions. 
Furthermore,  $10\%$ of the measurements are generated as bad data to show the robustness of  WLAV compared with  WLS.
These bad data are simulated by adding uniformly distributed random numbers (over the interval $[0,2]$) to the original measurements.
Table \ref{tab:4obj} reports the RMSE $\xi(\hat{\bv})$ averaged over 50 Monte-Carlo simulations for each test case,
where the parameter $\rho$ is set to $0.1$.
The penalized SDP method proposed in this work clearly outperforms the other techniques.

To show the scalability of the proposed approaches, we conduct simulations on large-scale systems by solving the penalized SDP or SOCP relaxations.
Figure \ref{fig_2}  shows the effect of additional measurements on reducing the estimation error.
In Figures \ref{fig_2}(a) and \ref{fig_2}(b), the RMSEs of the
estimated voltage vectors $\hat{\bv}$ are depicted  for  four different objective functions 
with respect to the percentage of nodes having measured active power injections.
The measurements are under two samples of the noise $\boldsymbol{\eta}$ corresponding to $c=0.01$ and $c=0.02$. 
It can be observed that the quality of  the estimation improves with the increase of nodal active power  measurements.
Even in the case when the number of measurements is limited and close to the number of unknown parameters, the proposed approach can still produce good estimates. In contrast, the methods with no penalty yield very high errors that are out of the plot ranges.


\begin{figure*}
	\centering
	\subfloat[\label{fig_2a}]{ \includegraphics[width =0.33\textwidth]{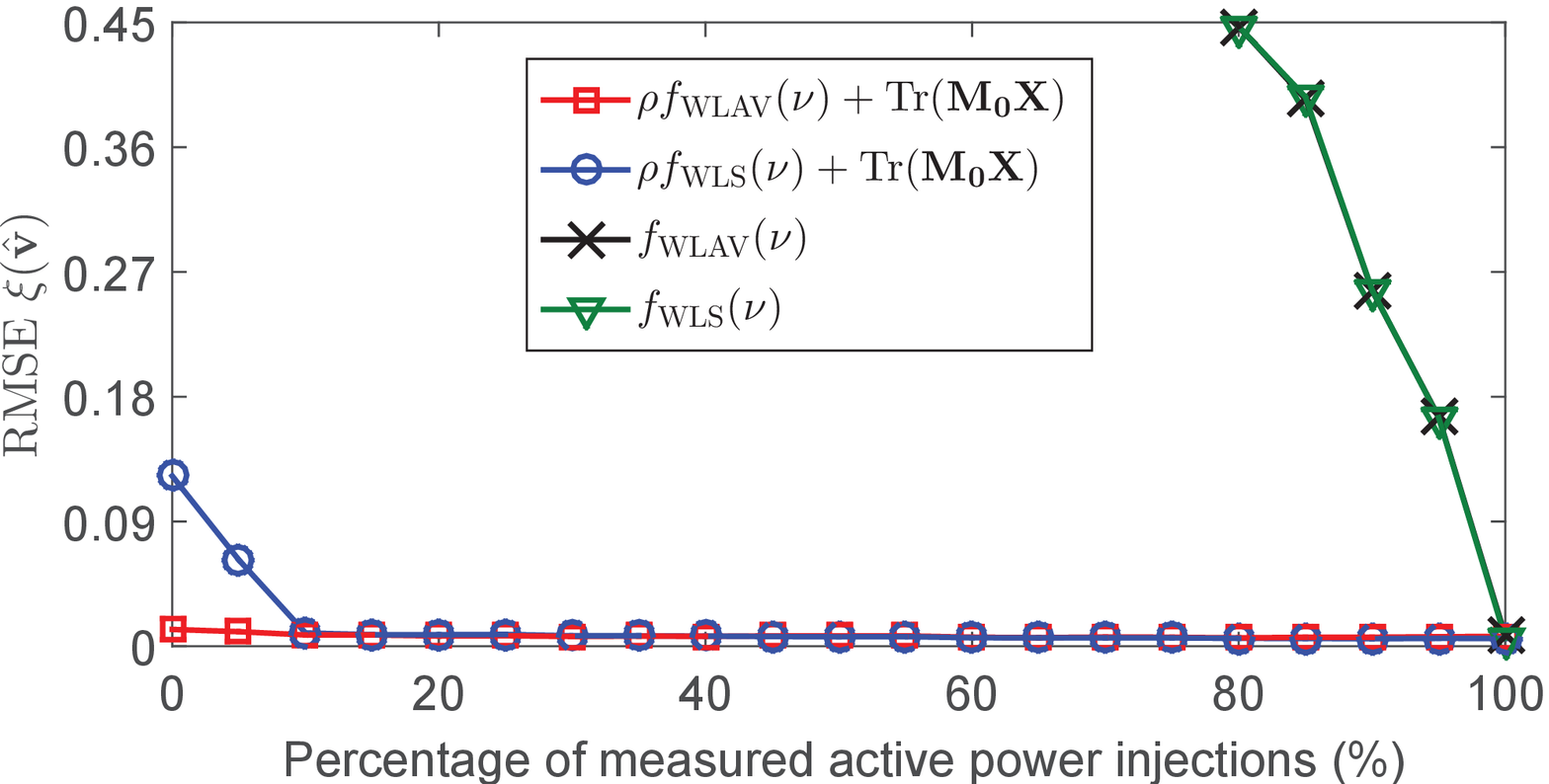}}
	\subfloat[\label{fig_2b}]{ \includegraphics[width =0.33\textwidth]{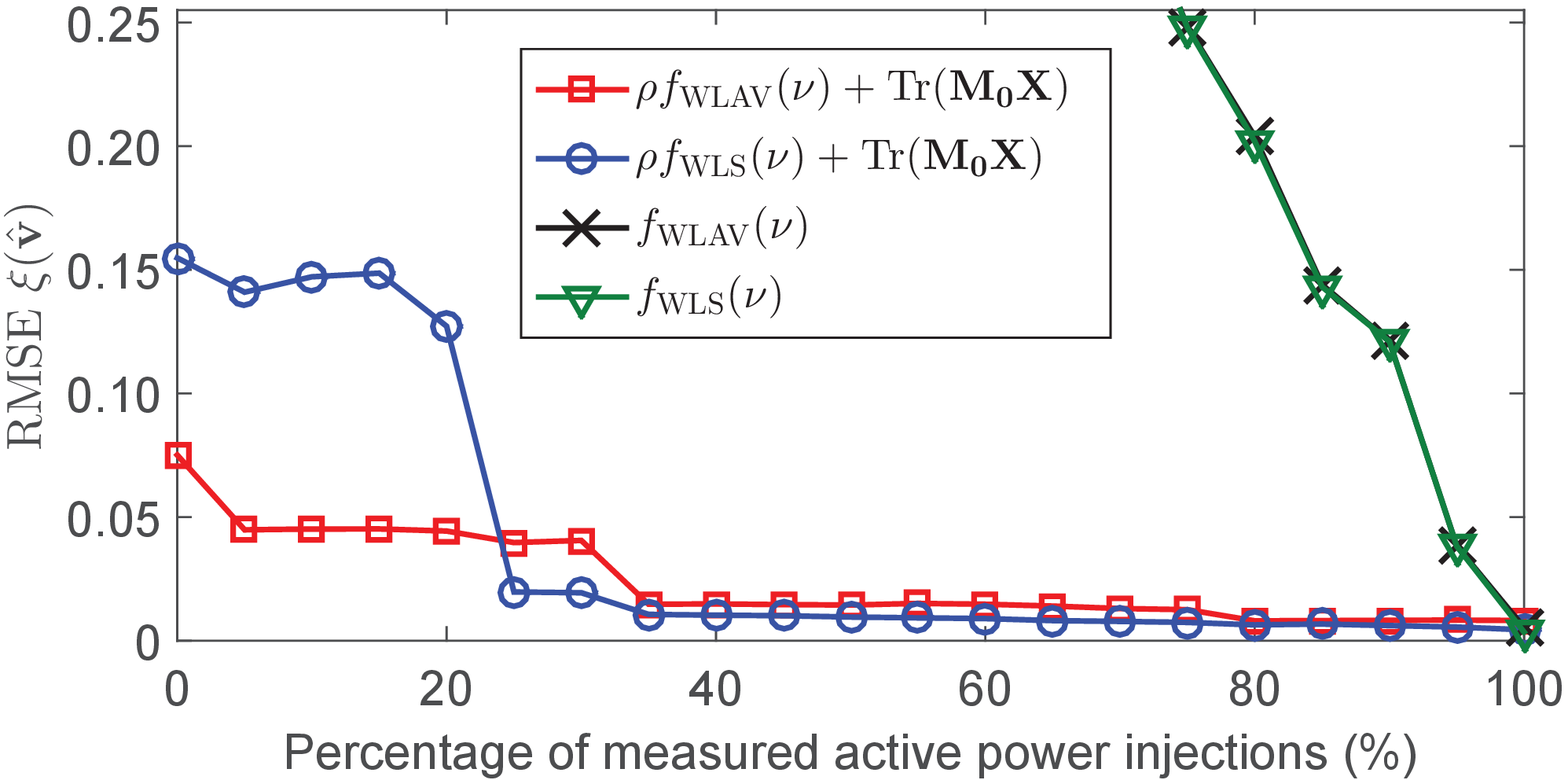}}
	\subfloat[\label{fig_2c}]{\includegraphics[width =0.33\textwidth]{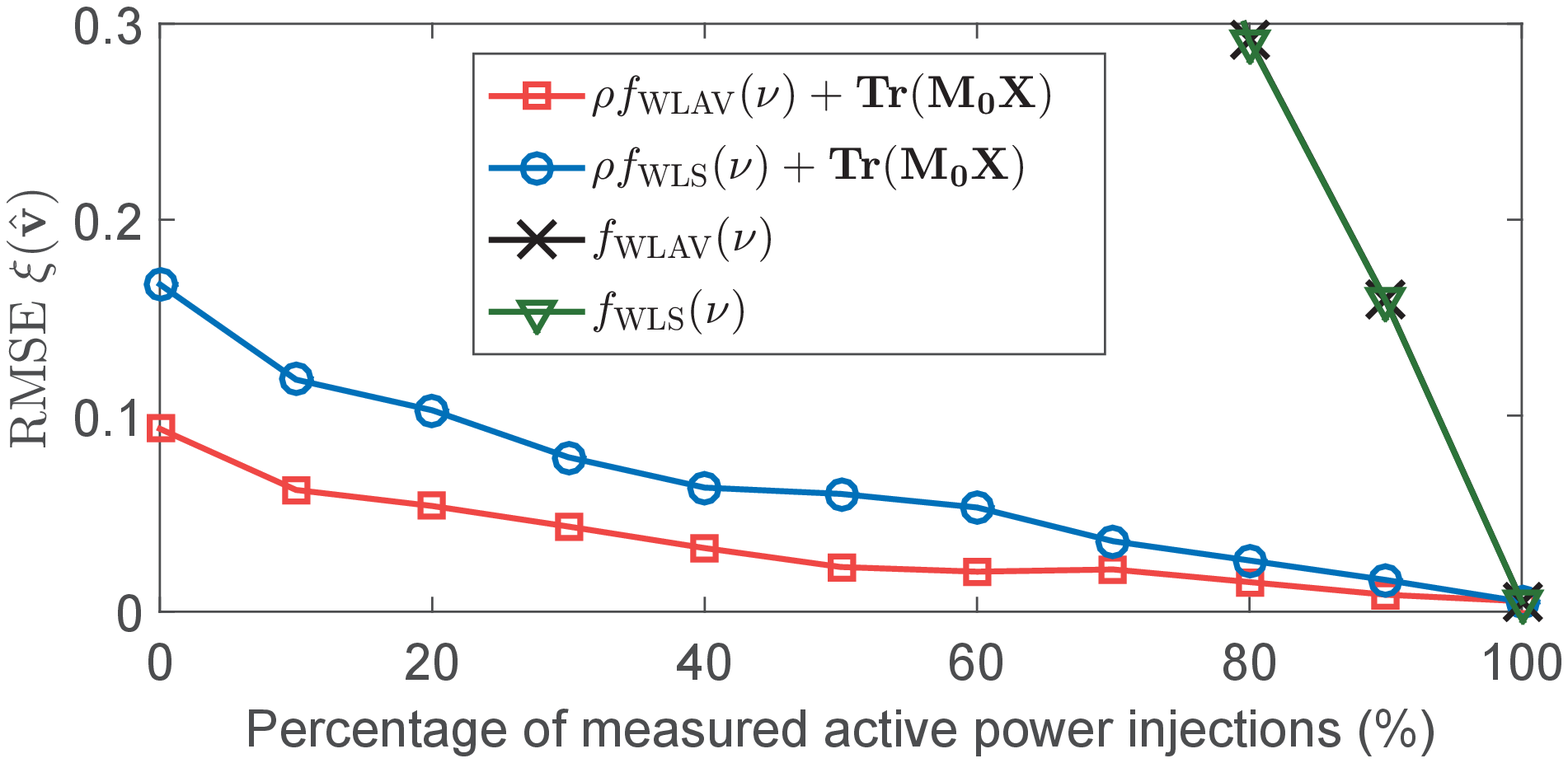}}
	\caption{The RMSEs of the estimated voltages obtained with  additional nodal power measurements: (a) $c=0.01$ and (b) $c=0.02$. Both are tested on PEGASE 1354-bus system using the penalized SDP;  (c) $c=0.01$ with PEGASE 9241-bus system using the penalized SOCP.}
	\label{fig_2}\vspace{-5mm}
\end{figure*}

In Figure \ref{fig_2}(c) for all four curves, it is assumed that the voltage magnitudes at all buses and active power flows in one direction for all branches are measured. Moreover, different percentages of nodes are chosen at which nodal active and reactive power measurements are made simultaneously.
The noise level is set to $c=0.01$ and the weight is $\rho = 5$. It can be observed that the quality of the estimation improves by increasing the number of additional measurements. The RMSE value at each data point is the average over 10 Monte-Carlo simulations for different noise realizations and choices of nodes with measured power injections.

Finally, Table \ref{tab:simuTime} lists the simulation time of the proposed conic relaxations. The total time is obtained by the command \texttt{cvx\_cputime}, which includes both \texttt{CVX} modeling time and solver time \cite{cvx}. For all benchmark systems from 9-bus to 118-bus, the SDP relaxation problem is solved by \texttt{SDPT3 4.0} \cite{sdpt3}. 
The simulation time is obtained by averaging over 50 Monte-Carlo simulations, which are tested on a macOS system with 2.7GHz Intel Core i5 and 8GB memory. For the last three large-scale test cases,  we utilize the SOCP relaxation with the solver~\texttt{MOSEK 7.0} \cite{mosek}. The simulation time corresponds to a single run, which is tested on a Windows system with 2.20GHz CPU and 12GB RAM. Clearly, it only takes a few seconds for each case (except the last one) to yield an optimal solution. Even for the large-scale 9241-bus network, the solver time for the proposed SOCP is less than 1 minute, which is fairly practical in real-world applications.

\section{Conclusions}\label{sec:Conclusions}

In this paper, a convex optimization framework is developed for solving the non-convex PF and PSSE problems.
To efficiently solve these two problems, the quadratic power flow equations are lifted into a 
higher-dimensional space, which enables their formulation as linear functions of a rank-one positive semidefinite matrix variable.  By meticulously designing an objective function, the PF feasibility problem is converted into a non-convex optimization problem and then relaxed to a convex program. The performance of the proposed convexification is studied in the case where the set of measurements includes:  (i) nodal voltage magnitudes, and (ii) one active power flow per line for a spanning tree of the power network. It is shown that the designed convex problem finds the correct solution of the PF problem as long as the voltage angle differences across the lines of the network are not too large.
This result along with the proposed framework is then extended to the PSSE problem.
Aside from the well-designed objective function for dealing with the non-convexity of PF,
a data fitting penalty based on the weighted least absolute value is included
to account for the noisy  measurements. This leads to a penalized conic optimization scheme.
The distance between the optimal solution of the proposed convex problem and the unknown state of the system is quantified in terms of the noise level,
which decays as the number of measurements increases. 
Extensive numerical results tested on benchmark systems corroborate our theoretical analysis.
Moreover, compared with the conventional WLS-based Newton's method as well as other convex programs with different regularizers, the proposed approaches have significant performance gains in terms of the RMSE of the estimated voltages.

\appendix

\subsection{Proof of Lemma~\ref{lem:dualcertf}}\label{appendix:dualcertf}
Let $\mu_1,\ldots,\mu_N\in\mathbb{R}$ and $\mu_{N+1},\ldots,\mu_{M}\in\mathbb{R}$ be the Lagrange multipliers associated with the constraints
\eqref{PF-SDPP-specM:meq_Node} and \eqref{PF-SDPP-specM:meq_Branch}, respectively.
A key observation is that each matrix $\bY_{l,pf}$ has only three possible nonzero entries:
\begin{align*}
\bY_{l,pf}(s,t) = \bY_{l,pf}^{\ast}(t,s) =  -\frac{y_{st}}{2},\quad \bY_{l,pf}(s,s) = \re(y_{st}).
\end{align*}
Hence, in order to design  $\hat{\boldsymbol{\mu}}\in\mathcal{D}(\mathbf{v})$, we first construct a vector $\hat{\boldsymbol{\mu}}^{(l)}$ for each $l\in\cL^{\prime}$ and then  show that the summation
\begin{align}
\hat{\boldsymbol{\mu}}:= \hat{\boldsymbol{\mu}}^{(1)}+\hat{\boldsymbol{\mu}}^{(2)}+\cdots+\hat{\boldsymbol{\mu}}^{(|\cL^{\prime}|)}
\end{align}
satisfies the requirements for being a dual SDP certificate.
Let $\{\tilde{\mathbf{e}}_1,\ldots,\tilde{\mathbf{e}}_{M}\}$ denote the canonical vectors in $\mathbb{R}^{M}$.
Under Assumption \ref{asmp1}, it is possible to set
\begin{align}
\hat{\boldsymbol{\mu}}^{(l)}\!:=\hat{\mu}^{(l)}_s \tilde{\mathbf{e}}_s + \hat{\mu}^{(l)}_t\tilde{\mathbf{e}}_t + \hat{\mu}^{(l)}_{N+l}\tilde{\mathbf{e}}_{N+l}
\end{align}
for each line $l$ that connects node $s$ to node $t$, such that
\begin{align}
\hat{\mu}^{(l)}_{N+l} &:= \frac{2\mathrm{Im}(v_s v^{\ast}_t M^{\ast}_{0;st})}{\mathrm{Im}(v_s v^{\ast}_t y^{\ast}_{st})},\,\,
\hat{\mu}^{(l)}_t := -\frac{|v_s|^2\mathrm{Im}(M_{0;st}y^{\ast}_{st})}{\mathrm{Im}(v_s v^{\ast}_t y^{\ast}_{st})}, \notag \\
\hat{\mu}^{(l)}_s &:= \frac{|v_t|^2}{|v_s|^2}\hat{\mu}^{(l)}_t-\mathrm{Re}(y_{st})\hat{\mu}^{(l)}_{N+l}.
\label{eq:mul}
\end{align}
In order to prove that $\hat{\boldsymbol{\mu}}$ satisfies the conditions in \eqref{dual_cer}, define
\begin{align}
\widehat{\mathbf{H}}^{(l)}:=\mathbf{M}^{(l)}_0+\hat{\mu}^{(l)}_s\mathbf{E}_s+\hat{\mu}^{(l)}_t\mathbf{E}_t+\hat{\mu}^{(l)}_{N+l}\mathbf{Y}_{l,p_f}
\end{align}
for every $l=(s,t)\in\cL_s$, where
\begin{align}
\mathbf{M}^{(l)}_0:= M_{0;st} {\mathbf{e}}_s{\mathbf{e}}^{\top}_t + M_{0;st}^{*} {\mathbf{e}}_t{\mathbf{e}}^{\top}_s.
\end{align}
It is easy to verify that the relations
\begin{align}
\widehat{\mathbf{H}}^{(l)}\mathbf{v}= \mathbf{0},\qquad \widehat{\mathbf{H}}^{(l)}\succeq \mathbf{0}
\label{eq:vnull}
\end{align}
hold for every $l\in\cL^{\prime}$.
Therefore, the matrix
\begin{align}
\mathbf{H}(\hat{\boldsymbol{\mu}})= \sum_{l\in\cL^{\prime}} \widehat{\mathbf{H}}^{(l)}
\end{align}
 satisfies the first two conditions in \eqref{dual_cer}.

It remains to prove that $\rank(\mathbf{H}(\hat{\bm{\mu}})) = N-1$, or equivalently
$\mathrm{dim}(\mathrm{null}(\mathbf{H}(\hat{\bm{\mu}}))) = 1$. Let $\bx$ be an arbitrary member of $\mathrm{null}(\mathbf{H}(\hat{\bm{\mu}}))$. Since each matrix $\widehat{\mathbf{H}}^{(l)}$ is positive semidefinite, we have $\widehat{\mathbf{H}}^{(l)}\bx = \mathbf{0}$ for every $l\in\mathcal{L}^{\prime}$.
As a result, it follows from \eqref{eq:mul} and \eqref{eq:vnull} that $\frac{x_s}{x_t} = \frac{v_s}{v_t}$
for $l=(s,t)$.
By the same reasoning, the relation $\frac{x_t}{x_a} = \frac{v_t}{v_a}$ holds true for each line $l^{\prime} = (t,a)$.
Upon defining $c:=\frac{x_s}{v_s}$, one can write
\begin{align*}
\frac{x_s}{v_s} = \frac{x_t}{v_t} = \frac{x_a}{v_a} = c.
\end{align*}
Repeating the above argument over the connected spanning subgraph ${\mathcal G}^{\prime}$ through all nodes $k \in \cN$ yields that
$\bx = c\bv$, and subsequently $\rank(\mathbf{H}(\hat{\bm{\mu}})) = N-1$.

\subsection{Proof of Theorem~\ref{thm:tightrelax}}\label{appendix:tightrelax}
By choosing sufficiently large values for the Lagrange multipliers associated with the voltage magnitude measurements \eqref{PF-SDPP-specM:meq_Node}, a strictly feasible point can be obtained for the dual problem \eqref{PF-SDPD}.  Therefore, strong duality holds between the primal and dual SDP problems.
According to Lemma \ref{lem:dualcertf}, there exists a dual SDP certificate $\hat{\boldsymbol{\mu}}\in\mathcal{D}(\mathbf{v})$ that satisfies  \eqref{dual_cer}. Therefore,
\begin{align}\label{2propty}
\mathrm{Tr}(\bH(\hat{\boldsymbol{\mu}})\mathbf{v}\mathbf{v}^{\ast}) = \mathbf{0},\qquad \bH(\hat{\boldsymbol{\mu}})\succeq \mathbf{0}.
\end{align}
This certifies the optimality of the point $\bX=\mathbf{v}\mathbf{v}^{\ast}$ for the SDP relaxation problem \eqref{PF-SDPP-specM}.
Moreover, the property $\rank(\bH(\hat{\boldsymbol{\mu}})) = N-1$
justifies the uniqueness of the primal SDP solution $\bX^{\text{opt}}$.

In order to prove recovery through the SOCP relaxation problem \eqref{PF-SOCP-specM}, it is useful to derive the dual SOCP problem:
\begin{subequations}
	\begin{align}
	\hspace{-2mm}\maxi_{
	\begin{subarray}{c}
	\bmu \in \mathbb{R}^M\\
	\mathbf{F}^{(1)}, \ldots,
	\mathbf{F}^{|\mathcal{L}^{\prime}|} \in \mathbb{H}^2\\
	\end{subarray}
	} \hspace{-2mm}& \quad -\bz^{\top}\bmu \\
	\st  &
	\sum_{
		l=(s,t)\in\mathcal{L}^{\prime}
		} {
		\begin{bmatrix}
		\tilde{\mathbf{e}}_{s}^{\top}\\
		\tilde{\mathbf{e}}_{t}^{\top}
		\end{bmatrix}^{\top}
		\!\!\!\mathbf{F}^{(l)}
		\begin{bmatrix}
		\tilde{\mathbf{e}}_{s}^{\top}\\
		\tilde{\mathbf{e}}_{t}^{\top}
		\end{bmatrix}}=\bM_0 + \sum_{j = 1}^M \mu_j\bM_j , \\
	\quad &
\mathbf{F}^{(l)}
	\succeq  \mathbf{0},\quad\quad\forall~l \in \cL^{\prime},
	\end{align}
\end{subequations}
where $\mu_1,\ldots,\mu_N\in\mathbb{R}$ and $\mu_{N+1},\ldots,\mu_{M}\in\mathbb{R}$ are the Lagrange multipliers associated with the constraints
\eqref{PF-SOCP-specM:meq_Node} and \eqref{PF-SOCP-specM:meq_Branch} respectively, and each $2\times2$ matrix $\mathbf{F}^{(l)}$
is the Lagrange multiplier associated with its corresponding second-order cone constraint in \eqref{PF-SOCP-specM:cone}.
Let $\bmu = \hat{\bmu}$, and set $\mathbf{F}^{(l)}:=\widehat{\mathbf{H}}^{(l)}[(s,t),(s,t)]$ for every $l\in\mathcal{L}^{\prime}$.
As before, the primal and dual feasibility conditions are satisfied, and in addition the complementary slackness
\begin{align}
\Tr\left(\mathbf{F}^{(l)}
\begin{bmatrix}
|v_s|^2 & v_s v^{\ast}_t\\
v_t v^{\ast}_s & |v_t|^2
\end{bmatrix}
\right)= 0,
\quad\forall~(s,t) \in \cL^{\prime}
\end{align}
 holds between the dual feasible point
$(\bmu,\{\mathbf{F}^{(l)}\}^{M}_{l=1})$
and the primal feasible point $\mathbf{v}\mathbf{v}^{\ast}$.
Let $\mathbf{X}^{\mathrm{opt}}$ be an arbitrary solution of \eqref{PF-SOCP-specM}. One can write:
\begin{align}
\Tr\left(\mathbf{F}^{(l)}
\begin{bmatrix}
X^{\mathrm{opt}}_{s,s} & X^{\mathrm{opt}}_{s,t}\\
X^{\mathrm{opt}}_{t,s} & X^{\mathrm{opt}}_{t,t}
\end{bmatrix}
\right)= 0,
\quad\forall~(s,t) \in \cL^{\prime}.
\end{align}
Both matrices $\mathbf{F}^{(l)}$ and
$\begin{bmatrix}
	X^{\mathrm{opt}}_{s,s} & X^{\mathrm{opt}}_{s,t}\\
	X^{\mathrm{opt}}_{t,s} & X^{\mathrm{opt}}_{t,t}
\end{bmatrix}$
are positive semidefinite and besides
\begin{align}
\mathrm{null}(\mathbf{F}^{(l)})=\{c\times[v_s \; v_t]^{\top} \;|\; c\in\mathbb{R}\},
\end{align}
where $l=(s,t)$. As a result,
\begin{align}
\begin{bmatrix}
X^{\mathrm{opt}}_{s,s} & X^{\mathrm{opt}}_{s,t}\\
X^{\mathrm{opt}}_{t,s} & X^{\mathrm{opt}}_{t,t}
\end{bmatrix}=c^{\prime}\times
\begin{bmatrix}
|v_s|^2 & v_s v^{\ast}_t\\
v_t v^{\ast}_s & |v_t|^2
\end{bmatrix}
\end{align}
holds for some $c^{\prime}>0$. On the other hand, according to primal feasibility, we have $X^{\mathrm{opt}}_{s,s}=|v_s|^2$ and $X^{\mathrm{opt}}_{t,t}=|v_t|^2$, which means that $c^{\prime}=1$. Therefore, all diagonal and those off-diagonal entries of $\mathbf{X}^{\mathrm{opt}}$ associated with the members of $\mathcal{L}^{\prime}$ are equal to their corresponding entries in $\mathbf{v}\mathbf{v}^{\ast}$.

\subsection{Proof of Theorem~\ref{thm:rmse}}\label{appendix:rmse}
Observe that \begin{align}
&\Tr(\bM_0 {\bX}^{\mathrm{opt}})\!+\!\! \rho \sum_{j=1}^M \sigma_j^{-1}\!\left|\Tr\!\left(\bM_j({\bX}^{\mathrm{opt}}\!\!-\!\!\bv\bv^{*})\right)\right|\!\!-\!\!\rho f_{\mathrm{WLAV}}(\boldsymbol{\eta}) \notag  \\
&\stackrel{(a)}{\leq}\Tr(\bM_0{\bX}^{\mathrm{opt}})\!+\!\rho \sum_{j=1}^M \sigma_j^{-1}\left|\Tr\left(\bM_j({\bX}^{\mathrm{opt}}\!-\!\bv\bv^{*})\right)\!-\!\eta_j\right|  \notag \\
&\stackrel{(b)}{\leq} \Tr(\bM_0\bv\bv^{*}) + \rho f_{\mathrm{WLAV}}(\boldsymbol{\eta}), \label{bound}
\end{align}
where the relation (a) follows from a triangle inequality and the inequality (b) is obtained by evaluating the objective of \eqref{PSSE-SDPP2} at the feasible point $\bv\bv^{*}$.
Therefore, we have
\begin{align}
&\Tr\left(\bM_0({\bX}^{\mathrm{opt}}-\bv\bv^{*})\right) + \rho \sum_{j=1}^M \sigma_j^{-1}\left|\Tr\left(\bM_j({\bX}^{\mathrm{opt}}-\bv\bv^{*})\right)\right| \notag \\
&\leq\,  2\rho f_{\mathrm{WLAV}}(\boldsymbol{\eta})\, .
\label{bound2}
\end{align}
Recall that
\begin{align*}
\bM_{0} = \bH(\hat{\boldsymbol{\mu}}) - \sum_{j=1}^M\hat{\mu}_j\bM_{j}, \quad   \bH(\hat{\boldsymbol{\mu}})\bv = \mathbf{0}.
\end{align*}
Upon defining $\vartheta_j:=\Tr\left(\bM_j({\bX}^{\mathrm{opt}}-\bv\bv^{*})\right)$, one can write
\begin{align}
\!\!\!\!\sum_{j=1}^M (\rho \sigma_j^{-1}|\vartheta_j|-\hat{\mu}_j\vartheta_j)\!+\! \Tr(\bH(\hat{\boldsymbol{\mu}}){\bX}^{\mathrm{opt}})\!\leq\!
 2\rho f_{\mathrm{WLAV}}(\boldsymbol{\eta}).
\label{bound3}
\end{align}
Hence, it follows from \eqref{eq:rho} that
\begin{align}\label{boundX}
\Tr(\bH(\hat{\boldsymbol{\mu}}){\bX}^{\mathrm{opt}}) \leq 2\rho f_{\mathrm{WLAV}}(\boldsymbol{\eta}).
\end{align}
Now, consider the eigenvalue decomposition of $\bH(\hat{\boldsymbol{\mu}}) = \bU\bLambda\bU^{*}$,
where $\bLambda = \diag(\lambda_1,\ldots,\lambda_N)$ collects the eigenvalues of
$\bH(\hat{\boldsymbol{\mu}})$ that are sorted in descending order. The matrix $\bU$ is a unitary matrix whose columns are the corresponding eigenvectors.
Define
\begin{equation}
\check{\bX}:=
\begin{bmatrix}
\tbX     &\tbx  \\
\tbx^{*}   &\alpha
\end{bmatrix} = \bU^{*}{\bX}^{\mathrm{opt}}\bU,
\end{equation}
where $\tbX \in \mathbb{H}_{+}^{N-1}$ is the $(N\!-\!1)$-th order leading principal submatrix of $\check{\bX}$.
It can be concluded from \eqref{boundX} that
\begin{align*}
 2\rho f_{\mathrm{WLAV}}(\boldsymbol{\eta}) \geq
\Tr(\bH(\hat{\boldsymbol{\mu}}){\bX}^{\mathrm{opt}}) =  \Tr(\bLambda \check{\bX}) \geq \lambda_2\Tr(\tbX),
\end{align*}
where the last inequality follows from the fact that $\mathrm{rank}(\mathbf{H}(\hat{\boldsymbol{\mu}}))=N-1$.
Therefore, upper bounds for the trace and Frobenius norm of the matrix $\tbX$ can be obtained as
\begin{align*}
\|\tbX\|_F \leq \Tr(\tbX) \leq \frac{2\rho}{\lambda_2} f_{\mathrm{WLAV}}(\boldsymbol{\eta}).
\end{align*}
By defining $\tbv = {\bv}/{\|\bv\|_2}$, the matrix ${\bX}^{\mathrm{opt}}$ can be decomposed~as
\begin{align}\label{decompXhat}
{\bX}^{\mathrm{opt}} &=  \bU\check{\bX} \bU^{*} \notag =
\begin{bmatrix}
\tbU\;\;     \tbv
\end{bmatrix}
\begin{bmatrix}
\tbX     &\tbx  \\
\tbx^{*}   &\alpha
\end{bmatrix}
\begin{bmatrix}
\tbU^{*}  \\
\tbv^{*}
\end{bmatrix} \notag \\
&= \tbU\tbX\tbU^{*} + \tbv \tbx^{*}\tbU^{*} + \tbU\tbx\tbv^{*} + \alpha\tbv\tbv^{*}.
\end{align}
Since $\check{\bX}$ is positive semidefinite, the Schur complement dictates the relation
$\tbX - \alpha^{-1}\tbx\tbx^{*}\succeq \mathbf{0}$. Using the fact that
$\alpha = \Tr({\bX}^{\mathrm{opt}}) - \Tr(\tbX)$, one can write
\begin{align}\label{bound:xn}
\|\tbx\|_2^2 \leq \alpha\Tr(\tbX) = \Tr({\bX}^{\mathrm{opt}})\Tr(\tbX) - \Tr^2(\tbX).
\end{align}
Therefore,
\begin{subequations}
\begin{align}
\hspace{-0.3cm} \|{\bX}^{\mathrm{opt}} &- \alpha\tbv\tbv^{*}\|_F^2
= \|\tbU\tbX\tbU^{*}\|_F^2 + 2 \|\tbv \tbx^{*}\tbU^{*}\|_F^2 \label{err:FroDecom} \\
&= \|\tbX\|_F^2 + 2\|\tbx\|_2^2 \label{err:Unitary} \\
&\leq \|\tbX\|_F^2 - 2\Tr^2(\tbX)+ 2\Tr({\bX}^{\mathrm{opt}})\Tr(\tbX)  \label{err:trace}\\
&\leq 2\Tr({\bX}^{\mathrm{opt}})\Tr(\tbX) \label{err:trace2}\\
&\leq \frac{4\rho f_{\mathrm{WLAV}}(\boldsymbol{\eta}) }{\lambda_2}\Tr({\bX}^{\mathrm{opt}}),\label{err:end}
\end{align}
\end{subequations}
where \eqref{err:FroDecom} follows from the fact that $\tbU^{*}\tbv = \mathbf{0}$, \eqref{err:Unitary} is due to
$\tbU^{*}\tbU = \mathbf{I}_{N-1}$, and \eqref{err:trace2} is in light of $\|\tbX\|_F \leq \Tr(\tbX)$. The proof is completed by choosing $\beta$ as  $\alpha/\|\bv\|^2_2$.

\subsection{Proof of Corollary~\ref{coro:probbound}}\label{appendix:probbound}
Define $\tilde{\eta}_i:=\eta_i/\sigma_i$ for $i=1,\ldots,M$. Then, $\tilde{\boldsymbol{\eta}}$ is a standard Gaussian random vector and
\begin{align}	
f_{\mathrm{WLAV}}(\boldsymbol{\eta})=\|\tilde{\boldsymbol{\eta}}\|_1.
\end{align}
Applying the Chernoff's bound \cite{Boucheron} to the  Gaussian vector $\tbmeta$ yields that
\begin{align}
   \mathbb{P}(&\|\tbmeta\|_1>t) \leq \myexp^{-\psi t}\mathbb{E}\, \myexp^{\psi \|\tbmeta\|_1}\notag = \myexp^{-\psi t}(\mathbb{E}\, \myexp^{\psi |\tilde{\eta}_1|})^M \notag \\
   &= \myexp^{-\psi t}\left(\sqrt{\frac{2}{\pi}}\int_0^{\infty}\myexp^{\psi x}\myexp^{-x^2/2}\diff x\right)^M \notag \\
   &= \myexp^{-\psi t}\left(\myexp^{\frac{\psi^2}{2}}\mathrm{erfc}\left(\frac{-\psi}{\sqrt{2}}\right)\right)^M
   \leq 2^M \myexp^{(M\psi^2-2\psi t)/2}\; , \label{upboundL1}
\end{align}
which holds for every $\psi>0$. Note that the complementary error function $\mathrm{erfc}(a) := \frac{2}{\sqrt{\pi}}\int_a^{\infty}\myexp^{-x^2}\diff x \leq 2$ holds for all $a\in \mathbb{R}$.
The minimization of the upper bound \eqref{upboundL1} with respect to $\psi$  gives  the equation $\psi^{\mathrm{opt}} =\frac{t}{M}$. Now, it follows from Theorem~\ref{thm:rmse}  that
\begin{align} \label{probbound}
\mathbb{P}(\zeta>t) &\leq \mathbb{P}\left(2\sqrt{\frac{\rho \|\tbmeta\|_{1} }{N\lambda}}>t\right) = \mathbb{P}\left(\|\tbmeta\|_{1}> \frac{t^2 N\lambda}{4\rho}\right) \notag \\
&\leq \exp\left(M\ln2-\frac{t^4 N^2\lambda^2}{32M\rho^2} \right).
\end{align}
The proof is completed by substituting $N = \frac{M}{\kappa}$ into \eqref{probbound}.

\bibliographystyle{IEEEtran}
\bibliography{SE_Line_bib}

\end{document}